\documentclass[a4paper,11pt]{amsart} 
\usepackage{amssymb, enumerate, graphicx, enumitem, color, comment}
\usepackage[dvipsnames,svgnames,table]{xcolor}
\usepackage{hyperref}
\hypersetup{
  pdfauthor={Paul Bastide, Jędrzej Hodor, Hoang La, William T. Trotter},
  pdftitle={Cube width},
    colorlinks,
    linkcolor={RoyalBlue},
    citecolor={RubineRed},
    urlcolor={blue!80!black}
}
\usepackage[capitalise, noabbrev]{cleveref}
\usepackage{mathtools}
\usepackage[foot]{amsaddr}
\newcommand{\q}[1]{``#1''}
\newcommand{\defin}[1]{\emph{\textcolor{ForestGreen}{#1}}}

\newenvironment{proofclaim}[1][]
	{\par\noindent {\it Proof of the claim}. }{ \hfill$\lozenge$\par\vspace{11pt}}
\newenvironment{proofsubclaim}[1][]
	{\par\noindent {\it Proof of the subclaim}. }{ \hfill$\circ$\par\vspace{11pt}}

% Footnotes
\usepackage[norule]{footmisc}
\makeatletter

\makeatother

\usepackage[T1]{fontenc}
\theoremstyle{plain} %% default 
\newtheorem{theorem}{Theorem}
\newtheorem{corollary}[theorem]{Corollary}

\newtheorem{lemma}[theorem]{Lemma} 

\newtheorem{proposition}[theorem]{Proposition}
 
\newtheorem{example}[theorem]{Example}
\theoremstyle{remark}

\newtheorem*{claim*}{Claim} 
\newtheorem*{subclaim*}{Subclaim} 
\bibstyle{plain}

% No two figures on one page
\setcounter{totalnumber}{1}

\DeclarePairedDelimiter\inrep{(}{)}

\newcommand{\bbA}{\mathbb{A}}
\newcommand{\bbB}{\mathbb{B}}

\newcommand{\cgC}{\mathcal{C}}

\newcommand{\cgF}{\mathcal{F}}

\newcommand{\Oh}{\mathcal{O}}
 
\newcommand{\cgR}{\mathcal{R}}
\newcommand{\cgS}{\mathcal{S}}
\newcommand{\cgT}{\mathcal{T}}

\newcommand{\ch}{\operatorname{ch}}
\newcommand{\cw}{\operatorname{cw}}
\newcommand{\iir}{\operatorname{iir}}

\newcommand{\MCW}{\mathbb{MCW}} 
\newcommand{\MTD}{\mathbb{MTD}} 
\newcommand{\MIIR}{\mathbb{MIIR}} 
\newcommand{\NMIIR}{\mathbb{NMIIR}} 

\addtolength{\textwidth}{3 truecm}
\addtolength{\textheight}{1 truecm}
\setlength{\voffset}{-.5 truecm}
\setlength{\hoffset}{-1.5 truecm}

\let\le\leqslant
\let\ge\geqslant
\let\leq\leqslant
\let\geq\geqslant

\let\subset\subseteq
\let\subsetneq\varsubsetneq

\let\epsilon\varepsilon

\begin{document}

\title[CUBE HEIGHT AND CUBE WITDH]%
{Cube Height, Cube Width and Related\\ 
Extremal Problems for Posets}

\author[Bastide]{Paul Bastide}

\author[Hodor]{J\k{e}drzej Hodor}

\author[La]{Hoang La}

\author[Trotter]{William T. Trotter}

\thanks{(P.~Bastide) \textsc{Mathematical Institute, University of Oxford, United Kingdom.} 
(J.~Hodor) \textsc{Theoretical Computer Science Department, Faculty of Mathematics and Computer Science and  Doctoral School of Exact and Natural Sciences, Jagiellonian University, Krak\'ow, Poland.}
(H.~La) \textsc{LISN, Universit\'e Paris-Saclay, CNRS, Gif-sur-Yvette, France.}
(W.~T.~Trotter) \textsc{School of Mathematics, Georgia Institute of Mathematics, Atlanta, Georgia 30332, U.S.A}}

\thanks{
\textit{E-mail addresses:} \href{mailto:paul.bastide@ens-rennes.fr}{paul.bastide@ens-rennes.fr},
\href{mailto:jedrzej.hodor@gmail.com}{jedrzej.hodor@gmail.com},
\href{mailto:hoang.la.research@gmail.com}{hoang.la.research@gmail.com},
\href{mailto:wtt.math@gmail.com}{wtt.math@gmail.com}}

\thanks{P. Bastide is supported by ERC Advanced Grant
883810, and J.~Hodor is supported by the National Science Center of Poland 
under grant UMO-2022/47/B/ST6/02837 within the OPUS 24 program.}

\thanks{\noindent\rule{\textwidth}{0.5pt}}

\thanks{\textbf{Acknowledgments:}
The research was conducted during the 2024 Order \& Geometry Workshop in Wittenberg.
We are grateful to the organizers and participants for creating a friendly and stimulating environment.
Some of the results are part of the PhD thesis of the first author~\cite{paulthesis} defended in June 2025.
Shortly before this paper was made public, another independent manuscript in which~\Cref{thm:cw-bound} is proved had been submitted to arxiv (Flídr, Ivan, and Jaffe~\cite{FIJ25}).
}

\thanks{\noindent\rule{\textwidth}{0.5pt}}

\begin{abstract} 
  Given a poset $P$, a family $\cgS=\inrep{S_x:x\in P}$ of sets indexed by the 
  elements of $P$ is called an inclusion representation of $P$ if 
  $x\le y$ in $P$ if and only if $S_x\subseteq S_y$. The cube height of a poset
  is the least non-negative integer $h$ such that $P$ has an inclusion representation
  for which every set has size at most~$h$.  In turn, the cube width of $P$
  is the least non-negative integer $w$ for which there is 
  an inclusion representation $\cgS$ of $P$ such that $|\bigcup\cgS|=w$ and every 
  set in $\cgS$ has size at most the cube height of $P$.  In this paper, we show that
  the cube width of a poset never exceeds the size of its ground set, and we
  characterize those posets for which this inequality is tight.  
  Our research prompted us to investigate related extremal problems for
  posets and inclusion representations.  Accordingly, the results for cube width
  are obtained as extensions of more comprehensive results that we believe to
  be of independent interest.
\end{abstract}

\maketitle

\thispagestyle{empty}

%\jedrzej{Title ideas: maybe (1) just "Cube width" (2) "Embedding posets into short and narrow boolean lattices"} \tom{I like the current title. See above.)}\paul{Slight preference for the current title too.}

\section{Introduction}
\medskip

We consider only finite posets with non-empty ground sets.\footnote{We allow subposets to be empty.}
Given a poset $P$, a family $\cgS=\inrep{S_x:x\in P}$ of sets is 
called an \defin{inclusion representation} of $P$ if for all $x,y\in P$, we 
have $x\le y$ in $P$ if and only if $S_x\subseteq S_y$. 
% \jedrzej{You don't like the notation $\cgS$ \defin{uses} $|\bigcup \cgS|$ elements?} \paul{This expression does not shock me, happy to keep/use it.}
% \jedrzej{To think later...}
Every poset has an inclusion representation.
, which we call the \defin{canonical inclusion representation} of a poset. 
For a poset $P$ and an element $x\in P$, let \defin{$D_P[x]$} denote the 
\defin{closed down set} of $x$ in $P$, i.e., the set of all $u\in P$ such 
that $u\leq x$ in $P$. 
One can easily verify that $\inrep{D_P[x]:x\in P}$ is an inclusion representation of $P$.
%Setting $S_x=D_P[x]$ for every $x\in P$, it follows that $\cgS=\{S_x:x\in P\}$  is an inclusion representation of $P$.  
We call this particular representation the \defin{canonical inclusion representation} of $P$.

% \tom{I moved the notion of the canonical inclusion representation here. Without
% it, it is conceivable that we're talking about something that doesn't always
% exist.}
% \jedrzej{Ok}

When $\cgS=\inrep{S_x:x\in P}$ is an inclusion representation of a poset $P$,
we refer to $\bigcup\cgS$ as the \defin{ground set} of $\cgS$.  
The \defin{cube height} of a poset $P$, denoted by \defin{$\ch(P)$}, is 
the least non-negative integer $h$ such that $P$ has an inclusion representation 
$\inrep{S_x:x\in P}$ with $|S_x|\leq h$ for every $x\in P$.
The \defin{cube width} of a poset $P$, denoted by \defin{$\cw(P)$}, is the least non-negative integer 
$w$ for which there is an inclusion representation $\cgS$ of $P$ such
that $|\bigcup\cgS|= w$, and $|S_x|\leq\ch(P)$ for every $x\in P$.
See~\Cref{fig:cw-fig1}.

% We show in Figure~\ref{fig:cw-fig1} inclusion representations of two
% posets.  This figure will be revisited several times later in the paper.  

\begin{figure}[tp]
   \begin{center}
     \includegraphics{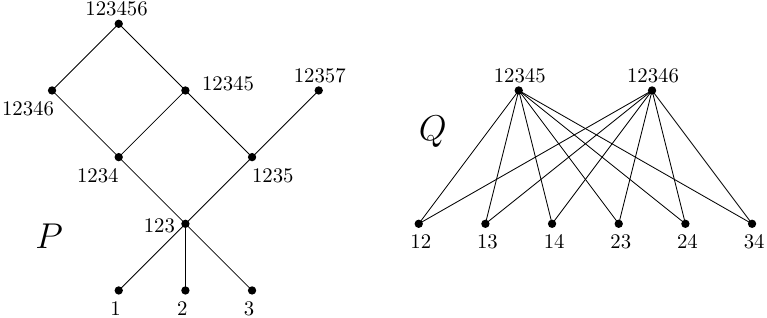}

   \end{center}
   \caption{Inclusion representations of posets $P$ and $Q$ are shown.   Sets
   are shown without braces and without commas.  It is straightforward to verify
   that $\ch(P)=6$ and $\cw(P)=7<|P|=10$.  Also,
   $\ch(Q)=5$ and $\cw(Q)=6<|Q|=8$.}
   \label{fig:cw-fig1}
\end{figure}

% \tom{I added this example.  First, I think it is not wise to give a key definition
% and not follow immediately with a relevant example.  Also, the poset on the
% left violates all three of the properties in the characterization of posets
% in $\MIIR$.
% Both representations are irreducible, while the poset on the right belongs to 
% $\MIIR$ but not $\MCW$.}

For a poset $P$, let $|P|$ denote the cardinality of its ground set.
The first author, Groenland, Ivan, and Johnston~\cite{Bastide24} investigated
a poset parameter called ``induced saturation number'', and their research
led them to formulate the definitions of cube height and cube width.  
They showed that $\cw(P)\leq |P|^2\slash 4 + 2$ for 
every poset $P$, and they conjectured that $\cw(P)\leq|P|$ for every poset $P$.  
In this paper, we resolve their conjecture in the affirmative by proving 
the following theorem.  

\begin{theorem}\label{thm:cw-bound}
  For every poset $P$, $\cw(P)\leq|P|$.
\end{theorem}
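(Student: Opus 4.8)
The plan is to translate inclusion representations into the dual language of up-sets and then to argue that a \emph{height-optimal} representation can always be realized with at most $|P|$ coordinates. Given an inclusion representation $\cgS=\inrep{S_x:x\in P}$ with ground set $U$, each $a\in U$ determines the set $F_a=\{x\in P: a\in S_x\}$, which is an up-set precisely because $x\le y$ forces $S_x\subseteq S_y$; conversely any family $\cgF$ of up-sets yields a representation via $S_x=\{F\in\cgF: x\in F\}$. One checks that $\cgF$ actually represents $P$ (is \emph{separating}) if and only if $\bigcap\{F\in\cgF: x\in F\}=\,\uparrow\! x$ for every $x$, where $\uparrow\! x=\{z\in P: z\ge x\}$ is the principal up-set: the inclusion $\supseteq$ is automatic, and the reverse inclusion is exactly the demand that every $x\not\le y$ be witnessed by some $F\in\cgF$ with $x\in F\not\ni y$. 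Under this dictionary the ground set is $\cgF$, the quantity $|S_x|$ is the \emph{load} of $x$ (the number of members of $\cgF$ containing $x$), $\ch(P)$ is the least attainable maximum load of a separating family, and $\cw(P)$ is the least size of a separating family all of whose loads are at most $\ch(P)$. Thus \Cref{thm:cw-bound} is equivalent to the assertion that some separating family of maximum load $\ch(P)$ has at most $|P|$ members.

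Before committing to a construction I would rule out the two most tempting shortcuts, since understanding why they fail shapes the real argument. The first is to use only principal up-sets, i.e.\ a representation of the form $S_x=D_P[x]\cap Q$ for some $Q\subseteq P$; this automatically has at most $|P|$ coordinates, but it cannot in general reach the optimal height. Indeed, for a single point $z$ lying above an antichain of size $k$, separating the $k$ minimal elements forces all of them into $Q$, giving $z$ load at least $k$, whereas the true cube height is only about $\log_2 k$ (encode the minimal elements by distinct sets in a middle layer and set $S_z$ to their union); so height-optimal families must use genuinely non-principal up-sets. The second shortcut is to take any height-optimal family and merely delete redundant up-sets until it is irredundant: this keeps the load from increasing, but irredundance alone does not bound the number of up-sets by $|P|$. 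For instance the antichain on $\{1,2,3\}$ has an irredundant separating family of four up-sets $\{1\},\{2\},\{1,3\},\{2,3\}$; the point is that this family has maximum load $2$, strictly above $\ch=1$. Hence the hypothesis that the load is \emph{exactly} $\ch(P)$ is indispensable, and any valid argument must use it.

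Accordingly, I would set up an extremal/exchange argument. Among all separating families of maximum load at most $\ch(P)$, choose one, $\cgF$, with the fewest members, and suppose toward a contradiction that $|\cgF|>|P|$. Viewing each $F\in\cgF$ as its indicator vector $\mathbf{1}_F\in\mathbb{R}^{P}$, more than $|P|$ such vectors in a $|P|$-dimensional space must be linearly dependent, so there are reals $\lambda_F$, not all zero, with $\sum_{F}\lambda_F\mathbf{1}_F=0$. The decisive feature of such a dependence is that perturbing the (initially all $1$) weights by $y_F=1+t\lambda_F$ leaves every load unchanged for all $t$, because the load of $x$ moves by $t\sum_{F\ni x}\lambda_F=t\bigl(\sum_F\lambda_F\mathbf{1}_F\bigr)(x)=0$. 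Thus one may slide $t$ until the first weight reaches $0$ and delete that up-set, producing a smaller candidate family whose loads are still bounded by $\ch(P)$, which would contradict minimality — provided the deletion does not destroy separation.

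The hard part will therefore be the separation bookkeeping during this slide. Deleting an up-set is harmless only for pairs $(x,y)$ that retain some other separator; the danger is a pair whose unique separator is the up-set being removed. To push the argument through I expect to have to choose the dependence and the sign of $t$ with care so that whenever a weight is driven to $0$ its separating duties are covered by up-sets whose weight is simultaneously increasing, and to exploit the optimal-load hypothesis exactly at this step (the $A_3$ example shows that at non-optimal load no such reduction can exist). Isolating the right substructure to run this exchange — equivalently, proving that a height-optimal, width-minimal family can have no linear dependence among more than $|P|$ of its up-sets without a redundant separation appearing — is, I believe, the crux; the dualization and the load-preserving perturbation are the easy scaffolding. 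Finally, the equality case of \Cref{thm:cw-bound} should fall out by tracing when the exchange has no room to operate, which is what would pin down the extremal posets (the antichains being the basic example, since load $1$ forces all sets to be distinct singletons and hence $|P|$ of them).
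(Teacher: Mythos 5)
Your dualization to separating families of up-sets is correct, and your two preliminary observations (principal up-sets alone cannot achieve optimal height; deletion-irredundance alone does not bound the family size) are sound and genuinely clarifying. But the proof has a gap exactly where you say you expect one: the linear-algebra exchange does not deliver the reduction. The perturbation $y_F=1+t\lambda_F$ preserves the \emph{weighted} loads, but this is vacuous for your purposes --- after you discard the up-set whose weight hits $0$, the actual (unweighted) loads of the remaining family are automatically at most the old ones, dependence or no dependence, because deleting a member of $\cgF$ can only decrease each count $|\{F\in\cgF: x\in F\}|$. So the linear dependence contributes nothing to the load constraint, and the entire burden falls on showing that \emph{some} member of a height-optimal, width-minimal family with more than $|P|$ members can be deleted without destroying separation. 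You explicitly leave this unproved (``I expect to have to choose the dependence and the sign of $t$ with care \dots\ is, I believe, the crux''), and there is no indication that increasing the weights of other up-sets could ever ``cover the separating duties'' of the deleted one, since separation is a property of which up-sets are present, not of their weights. As it stands, the argument establishes nothing beyond the setup.

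The paper's route is structurally different and suggests why single-coordinate deletion is too weak an operation. It proves the stronger Theorem~\ref{thm:iir-bound}: every \emph{irreducible} representation (one admitting no strict reduction, a far stronger condition than ``no single coordinate can be deleted'') has ground set of size at most $|P|$; this gives Theorem~\ref{thm:cw-bound} because reducing a height-optimal representation never increases any $|S_x|$. The induction on $|P|$ hinges on finding an element $y$ with $|S_y|<|D_P[y]|$ (if none exists, the canonical representation is already a strict reduction), and then Lemma~\ref{lem:key-step} does not delete anything local: it keeps $\cgS$ on $D_P[y]$ and \emph{wholesale rebuilds} the representation on $P-D_P[y]$ from an irreducible representation of the auxiliary poset on $\{S_x-S_y : x\in Q\}$, to which the induction hypothesis applies. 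If you want to salvage your approach, the missing ingredient is precisely a replacement for that global rebuild --- you would need to prove that a height-optimal family of more than $|P|$ up-sets always contains a deletable member, and your own $A_3$ example shows that any such proof must use the load bound in an essential way that the indicator-vector dependence does not capture.
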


\Cref{thm:cw-bound} directly improves the best-known general bound on the induced saturation number of posets~\cite{Bastide24}.
When $n$ is a non-negative integer, we use the abbreviation $[n]$ for the set of the $n$ least positive integers.
Given a poset $P$ and a positive integer $n$, we say that a family $\cgF$ of subsets of $[n]$ is \defin{$P$-saturated} if $\cgF$ does not contain an induced copy of $P$ ($[n]$ is treated as a poset ordered by inclusion), but adding any other set to $\cgF$ creates an induced copy of $P$.
For a poset $P$ and a positive integer $n$, the $n$th \defin{induced saturation number} of $P$, denoted by \defin{$\mathrm{sat}^*(n,P)$}, is the size of the smallest $P$-saturated family of subsets of $[n]$. 
It was shown that $\mathrm{sat}^*(n,P) = \Oh(n^{\cw(P)-1})$~\cite{Bastide24}, and therefore,~\Cref{thm:cw-bound} yields the following.

\begin{corollary}
    For every poset $P$, and every $n \in \mathbb{N}$, $\operatorname{sat}^*(n,P) = O(n^{|P|-1})$.
\end{corollary}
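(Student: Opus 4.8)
The plan is to combine the two quantitative inputs already on the table. From the work of the first author, Groenland, Ivan, and Johnston~\cite{Bastide24} we have the bound $\mathrm{sat}^*(n,P) = O(n^{\cw(P)-1})$, valid for every poset $P$, while \Cref{thm:cw-bound} supplies the structural estimate $\cw(P) \leq |P|$. The entire argument amounts to feeding the second inequality into the first, so I would simply record the resulting monotone comparison of exponents.

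Concretely, I would first unwind the asymptotic notation: there exist a constant $C = C(P)$ and a threshold $n_0$ so that $\mathrm{sat}^*(n,P) \leq C\, n^{\cw(P)-1}$ for all $n \geq n_0$. Next I would invoke $\cw(P) - 1 \leq |P| - 1$ from \Cref{thm:cw-bound}, and observe that for every integer $n \geq 1$ the map $k \mapsto n^{k}$ is nondecreasing in $k$, whence $n^{\cw(P)-1} \leq n^{|P|-1}$. Chaining these two inequalities yields $\mathrm{sat}^*(n,P) \leq C\, n^{|P|-1}$ for all $n \geq n_0$, which is precisely the assertion $\mathrm{sat}^*(n,P) = O(n^{|P|-1})$.

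There is essentially no obstacle to overcome: the corollary is an immediate consequence of \Cref{thm:cw-bound} together with the previously established relationship between the induced saturation number and the cube width. The only point meriting a moment's care is that the constant hidden in the $O(\cdot)$ notation depends on $P$ (both through $\cw(P)$ and through the constant in the cited bound of~\cite{Bastide24}), which is harmless since $P$ is fixed throughout the statement.
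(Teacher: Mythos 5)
Your argument is correct and is exactly the paper's: the corollary is obtained by plugging the bound $\cw(P)\leq|P|$ from \Cref{thm:cw-bound} into the estimate $\mathrm{sat}^*(n,P)=O(n^{\cw(P)-1})$ from~\cite{Bastide24}, using monotonicity of $n^k$ in $k$. Nothing further is needed.
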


The key idea of our proof of~\Cref{thm:cw-bound} is to design the correct induction statement.
In particular, we prove a stronger statement regarding inclusion representations of posets that immediately implies~\Cref{thm:cw-bound}.
Note that we give a short, self-contained, and elementary proof of this stronger result.
In order to state it, we need several additional definitions.

% \tom{I think this ``advertisement'' makes sense.  Otherwise, why didn't we
% just stick to cube-width?  Also, I deleted two subsection headings.}
% \jedrzej{I like the headings.}

\subsection{Irreducible inclusion representations}
Let $P$ be a poset, and let $\cgS = \inrep{S_x:x\in P}$ and 
$\cgS'=\inrep{S_x':x\in P}$ be inclusion representations of $P$.
We say that $\cgS$ and $\cgS'$ are \defin{isomorphic} if there is a bijection 
$f\colon\bigcup\cgS\to\bigcup\cgS'$ such that for every $x\in P$ and every 
$a\in\bigcup\cgS$, $a\in S_x$ if and only if $f(a)\in S'_x$. 
% \paul{(1) is implied by (2), and I am not sure it helps the comprehension so much.}
% \jedrzej{I agree.}
We say that $\cgS$ is a \defin{reduction} of $\cgS'$ if 
$|\bigcup\cgS|\le|\bigcup\cgS'|$ and $|S_x|\le|S'_x|$ for every $x\in P$.
With this definition, an inclusion representation is a reduction of itself.
We say that $\cgS$ is \defin{equivalent} to $\cgS'$ if $\cgS$ is a reduction of 
$\cgS'$ and $\cgS'$ is a reduction of $\cgS$.

Clearly, if $\cgS$ and $\cgS'$ are inclusion representations of a poset $P$, and
they are isomorphic, they are equivalent.  On the other hand, 
an equivalence class can consist of arbitrarily many different isomorphism classes.
To see this, let $s$ be an integer with $s\ge3$, and set $t=\binom{2s}{s}$.
Then let $P$ be a poset of height~$2$ such that (1)~$|P|=t+2$; 
(2)~$P$ has $t$ minimal elements; (3)~the remaining two elements of $P$ 
are incomparable and each covers $s+1$ minimal elements;
(4)~no minimal element has two upper covers.
Let $\cgT$ consist of all $s$ element subsets of $[2s]$, and set
$S_1=\{1,\dots,s+1\}$.  Then, to construct a family of sets that is
an inclusion representation of $P$, we simply add to
$\cgT\cup\{S_1\}$ one additional set of the form $\{i,i+1,\dots,i+s\}$, 
where $3\le i\le s$.  
The assignments of sets in the representation to elements of posets are clear.
Distinct choices for $i$ yield inclusion representations
that are equivalent but not isomorphic.

When $\cgS$ and $\cgS'$ are inclusion
representations of $P$, we say that $\cgS$ is a \defin{strict reduction} of 
$\cgS'$ if $\cgS$ is a reduction of $\cgS'$ but they are not equivalent. 
An inclusion representation that has no strict reduction is said to be \defin{irreducible}. 
We only consider finite poset, therefore, given any inclusion representation $\cgS$
of a poset $P$, it is clear that either $\cgS$ is irreducible or there is an irreducible inclusion
representation $\cgS'$ of $P$ such that
$\cgS'$ is a strict reduction of $\cgS$.  Both inclusion representations shown in~\Cref{fig:cw-fig1} are irreducible.
% \paul{I slightly changed the paragraph above, I guess the key point that make the existence of this irreducible $\cgS'$ is that we consider finite poset. }
% \jedrzej{Looks good.}
Note that the poset $Q$ in~\Cref{fig:cw-fig1} has (at least) two irreducible inclusion representations with ground sets of different size.
Indeed, if $\cgS$ is the representation given in the figure and $\cgC$ is the canonical inclusion representation, then, $|\bigcup \cgS| = 6$ and $|\bigcup \cgC| = 8$.
Also, both $\cgS$ and $\cgC$ are irreducible. 

% \begin{figure}[tp]
%     \centering 
%     \includegraphics[scale=1]{figs/irreducible-diffrent-ground-sets.pdf} 
%     \caption{
%     A diagram of a $9$-elements poset $P$.
%     We give two inclusion representations of $P$ (in blue and in pink). 
%     Note that both representations are irreducible, however, one uses $8$ elements and the other one $9$ elements.
%     In particular, $\iir(P) = 9$.
%     Moreover, note that if the elements of $P$ are named appropriately with the positive one-digit integers, the blue representation is the canonical inclusion representation.
%     Finally, observe that $\ch(P) = 6$, and so, $\cw(P) = 8$, which yields $\cw(P) < \iir(P)$.
%     } 
%     \label{fig:irreducible-diffrent-ground-sets} 
% \end{figure} 

A natural extremal problem that arises is to find the maximum size of the ground set of an irreducible inclusion representation of a given poset.
Thus, for a poset $P$, we define \defin{$\iir(P)$} as the maximum non-negative integer $w$ such that there is an irreducible inclusion representation of $P$ with the ground set of size $w$.
Note that, for every poset $P$, we have $\cw(P) \leq \iir(P)$.
Indeed, for a poset $P$, take an inclusion representation $\cgS$ of $P$ witnessing $\ch(P)$ and then set $\cgS'$ to be an irreducible inclusion representation of $P$ such that $\cgS'$ is a reduction of $\cgS$.
Therefore, the following result implies~\cref{thm:cw-bound}.

\begin{theorem}\label{thm:iir-bound}
  For every poset $P$, $\iir(P)\leq|P|$.
\end{theorem}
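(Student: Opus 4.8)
The plan is to reformulate inclusion representations through their ground elements. For a representation $\cgS=\inrep{S_x:x\in P}$ and a ground element $a\in\bigcup\cgS$, let $U_a=\{x\in P: a\in S_x\}$. Since $x\le y$ forces $S_x\subseteq S_y$, each $U_a$ is an up-set of $P$, and the family $\inrep{U_a}$ determines $\cgS$ via $S_x=\{a: x\in U_a\}$; the representation axiom is equivalent to the condition that for every pair $x\not\le y$ some $U_a$ contains $x$ but not $y$. Under this dictionary $|\bigcup\cgS|$ is the number of up-sets and $|S_x|$ is the number of up-sets containing $x$. A quick warm-up observation is that in an irreducible representation every ground element $a$ is the \emph{unique} witness of some incomparability, i.e.\ $S_{x}\setminus S_{y}=\{a\}$ for some $x\not\le y$ (otherwise $a$ could simply be deleted from the ground set); moreover one may take $x\in\Min(U_a)$. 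This already injects $\bigcup\cgS$ into the set of incomparable pairs, but that only yields a quadratic bound, so a finer argument is needed.

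Rather than argue about a fixed irreducible $\cgS$ directly, I would prove the following statement, call it $(\star)$, which implies \Cref{thm:iir-bound}: \emph{every inclusion representation $\cgS$ of $P$ admits a reduction $\cgT$ with $|\bigcup\cgT|\le|P|$}. Indeed, if $\cgS$ is irreducible then the reduction $\cgT$ guaranteed by $(\star)$ must be equivalent to $\cgS$, whence $|\bigcup\cgS|=|\bigcup\cgT|\le|P|$. The canonical representation $\inrep{D_P[x]:x\in P}$ shows the bound in $(\star)$ is the right target, since its ground set has size exactly $|P|$ and all of its up-sets $\uparrow a$ are principal.

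I would prove $(\star)$ by induction on $|P|$, the case $|P|=1$ being trivial (take all sets empty). For the inductive step, fix a maximal element $z$, let $P'=P-z$, and apply the inductive hypothesis to the restricted representation $\cgS|_{P'}$ to obtain a reduction $\cgT'$ of $\cgS|_{P'}$ with $|\bigcup\cgT'|\le|P|-1$. It remains to re-attach $z$: keeping $T_x=T'_x$ for $x\ne z$, I must define $T_z$ so that the resulting family represents $P$, so that $|T_z|\le|S_z|$, and so that at most one new ground element is introduced, giving $|\bigcup\cgT|\le|P|$. The natural candidate is $T_z=\bigcup_{x<z}T_x$ together with one fresh private element.

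The hard part is exactly this re-attachment. Two things can go wrong: for some $y\parallel z$ we may have $T_y\subseteq\bigcup_{x<z}T_x$ (so $z$ and $y$ fail to be separated, since the fresh element cannot lie in $T_y$), and the single fresh element may push $|T_z|$ above $|S_z|$. Both failures are genuine, because in $\cgS$ the incomparability $y\parallel z$ is witnessed by some ground element of $S_y\setminus S_z$, yet the recursive reduction of $P'$ is free to discard precisely such witnesses. The $n$-element antichain is a useful caution here: it has irreducible representations of many different ground-set sizes (from roughly $\log n$ up to $n$), so no purely local deletion, and no ``all up-sets are principal'' argument, can work — one is forced to compare against a genuinely different representation. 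I therefore expect the real content to be a \emph{strengthened} inductive statement, the correct induction statement alluded to above, that additionally records, for the distinguished maximal element $z$, a bounded amount of witnessing data (for instance, that the reduction of $P'$ preserves, for each $y\parallel z$, an element of $\bigcup\cgT'$ realizing $T_y\not\subseteq\bigcup_{x<z}T_x$), so that re-attaching $z$ costs at most one new ground element while respecting $|T_z|\le|S_z|$. Designing that bookkeeping so the induction closes is the crux of the proof.
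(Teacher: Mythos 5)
Your proposal correctly reduces the theorem to the statement $(\star)$ that every inclusion representation admits a reduction with ground set of size at most $|P|$, and your diagnosis of why the naive induction (delete a maximal element $z$, reduce $P-z$, re-attach $z$ with one fresh ground element) fails is accurate. But the proof stops exactly at the crux: the re-attachment step is never carried out, and the proposed remedy --- a strengthened inductive hypothesis that ``records a bounded amount of witnessing data'' for the pairs $y\parallel z$ --- is left entirely unspecified. There is no argument that such bookkeeping can be made to close, and indeed it is not clear that it can: the witnesses you would need to preserve live in $\bigcup\cgT'$, whose size budget is already exhausted by $|P|-1$, and a single maximal element $z$ can be incomparable to linearly many elements of $P'$. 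So as written this is a plan with a hole in its central step, not a proof.

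The paper closes the induction by changing the decomposition in three ways that your sketch does not anticipate. First, the element to be split off is not an arbitrary maximal element but one chosen by a dichotomy: either $|S_x|\ge|D_P[x]|$ for all $x$, in which case the canonical representation $\inrep{D_P[x]:x\in P}$ is already the desired reduction and no recursion is needed, or there is some $y$ with $|S_y|<|D_P[y]|$, and that deficit is precisely the slack that pays for the fresh element later. Second, one removes the whole closed down-set $D_P[y]$, not a single point, keeping the sets $S_x$ for $x\in D_P[y]$ untouched (so the troublesome separations below $y$ never have to be re-established). Third, and most importantly, the recursion is applied not to the subposet $Q=P-D_P[y]$ with its restricted representation, but to the \emph{quotient} poset $Q'$ whose elements are the difference sets $S_x-S_y$ ordered by inclusion; distinct elements of $Q$ may collapse in $Q'$, and the inductive bound $\iir(Q')\le|Q'|\le|Q|$ together with the reassembly $S_x'=R_\alpha'\cup(S_x\cap S_y)\cup A'$ (where $A'$ contributes at most one extra element, accounted for by the unique-minimal-element case) gives $|\bigcup\cgS'|\le|Q'|+1+|S_y|\le|Q|+|D_P[y]|=|P|$. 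Your up-set reformulation and the observation that irreducibility forces every ground element to be a unique witness are fine but play no role in this argument; the missing idea is the quotient-by-$S_y$ construction, without which the induction does not close.
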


The inequality in~\Cref{thm:iir-bound} can be strict.
For instance, as discusses before, the poset $Q$ in~\Cref{fig:cw-fig1} satisfies $\iir(Q) \geq 8 = |Q|$ (witnessed by the canonical inclusion representation), hence, $6 = \cw(Q)<\iir(Q)=|Q|=8$.

\subsection{Other related notions}
The definition of cube width may seem slightly convoluted as it comes straight 
from the application discussed in~\cite{Bastide24}.  However, there is 
a much simpler parameter that is relevant to our study.  For a poset $P$, 
the \defin{$2$-dimension} of $P$, 
denoted \defin{$\dim_2(P)$}, is the least non-negative integer $w$ such that there is an 
inclusion representation $\cgS=\inrep{S_x:x\in P}$ of $P$ with $|\bigcup\cgS|= w$.
Note that there is no restriction on the sizes of sets in $\cgS$.  The concept of 
$2$-dimension is a generalization of the celebrated notion of dimension 
introduced in 1941 by Dushnik and Miller~\cite{DM41}.  Also, $2$-dimension 
generalizes to $k$-dimension for any integer $k \geq 2$~\cite{Trotter76}.
The concept of $2$-dimension was first studied by Nov\'ak~\cite{Novak63} in 
1963 and later by others~\cite{Trotter75, Trotter75-2, BaymWest12, LewisSouza21}.

Canonical inclusion representations witness that $\dim_2(P)\leq |P|$ 
for every poset $P$.  Also, for every poset $P$, we have
\[
  \ch(P) \leq \dim_2(P) \leq \cw(P) \leq \iir(P).
\]
As noted just above, the inequality $\cw(P)\le\iir(P)$ can be strict.
In order to see that the inequalities $\ch(P)\leq\dim_2(P)$ and
$\dim_2(P)\le\cw(P)$ can be strict, we simply consider a large enough antichain $P$.
Then, $\ch(P) = 1$ and so $\cw(P) = |P|$.
On the other hand, $P$ can be realized as larger subsets.
Namely, let $n$ and $t$ be positive integers such that $\binom{n}{t} \geq |P|$ and realize $P$ as $t$-subsets of $[n]$.
In particular, $\dim_2(P) \leq n$, whereas $n$ can be much smaller than $|P|$.
More precisely, Sperner's theorem implies that
$\dim_2(P)$ is the least $s$ such that $\binom{s}{\lfloor s/2\rfloor}\ge |P|$, which is of order $\Theta(\log |P|)$.
Finally, note that for an antichain $P$, we have $\dim_2(P) < |P|$ whenever $|P| \geq 5$.

Several interesting properties of $2$-dimension are known~\cite{Trotter75}.
First, $2$-dimension is \defin{monotonic}, that is, if $Q$ is a subposet of $P$, 
then $\dim_2(Q)\le\dim_2(P)$.  Second, abusing terminology slightly, we say 
that $2$-dimension is \defin{continuous}, i.e., small changes in the poset 
can only produce small changes in its $2$-dimension. 
Specifically, if $x$ is an element in a non-trivial poset $P$, 
and $Q=P-\{x\}$, then $|\dim_2(P)-\dim_2(Q)|\le 2$. 
There are simple examples to show that this inequality can be tight.

Although cube height is easily seen to be monotonic, the
next example shows that cube height is not continuous.  The example
also shows that cube width and the maximum size of a ground set of an irreducible inclusion representation ($\iir(\cdot)$) are neither monotonic nor continuous.

\begin{example}
  Let $t$ be a positive integer with $t\ge3$, and set $s=\binom{2t+1}{t}$.  
  Let $Q$ be an $s$-element antichain, and let $P$ be the poset obtained
  from $Q$ by adding a unique maximal element.  
  Then the following statements hold: 
  \begin{enumerate}
    \item $\ch(P) = \dim_2(P) = \cw(P) = \iir(P)=2t+1$;
    \item $\ch(Q)=1$, $\dim_2(Q) = 2t+1$, and $\cw(Q)=\iir(P)=s$.
  \end{enumerate}
\end{example}

\subsection{Posets with \q{wide} inclusion representations}
By~\Cref{thm:iir-bound}, we have $\ch(P) \leq \dim_2(P) \leq \cw(P) \leq \iir(P) \leq |P|$.
Once we have this chain of inequalities, it is natural to consider the associated characterization problems.
Given a poset $P$, can we detect with a polynomial time algorithm whether or not $p(P) = |P|$ for each parameter $p \in \{\ch,\dim_2,\cw,\iir\}$.
For the cube height, one can immediately see that there are no posets $P$ with $\ch(P) = |P|$.
For $2$-dimension, the characterization was already given in~\cite{Trotter75}.
However, a cleaner and more elegant proof emerges from our more comprehensive results with essentially no extra effort required.
We prove the following result.

\begin{theorem}\label{thm:poly}
    For each parameter $p \in \{\dim_2,\cw,\iir\}$ there is a polynomial-time algorithm, which for a poset $P$ decides if $p(P) = |P|$.
\end{theorem}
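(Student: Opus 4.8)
The plan is to pass to the dual, \q{filter} description of inclusion representations and to read off each tightness condition from the analysis of a single reduction step. Assigning to each ground element $a$ the set $U_a=\{x\in P:a\in S_x\}$ sets up a correspondence between inclusion representations $\cgS$ and families of filters (up-sets) of $P$: the axiom $x\le y\iff S_x\subseteq S_y$ holds exactly when the family is \emph{separating}, i.e.\ for every non-relation $x\not\le y$ some $U_a$ contains $x$ but not $y$. Under this dictionary $|\bigcup\cgS|$ is the number of filters and $|S_x|$ is the number of filters containing $x$, and the canonical representation corresponds to the family of principal filters $\{\uparrow u:u\in P\}$. Since separating $x\not\le y$ by a filter $U$ means precisely that the pair lies in the block $U\times(P\setminus U)$, a representation is a cover of the non-relations of $P$ by such filter-blocks, and $\dim_2(P)$ is the least number of blocks in such a cover. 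Throughout I will use the chain $\dim_2(P)\le\cw(P)\le\iir(P)\le|P|$, so that tightness propagates upward: $\dim_2(P)=|P|$ forces $\cw(P)=|P|$, which forces $\iir(P)=|P|$, and the case analysis can be organized around this.

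First I would treat $\iir$, the loosest of the three. In an irreducible representation all the filters $U_a$ are distinct and proper (two equal coordinates, or a coordinate contained in every set, can be deleted to produce a strict reduction), so the principal family is a candidate exactly when $P$ has no minimum. The key lemma I would aim for is that $\iir(P)=|P|$ if and only if the canonical representation is itself irreducible; the nontrivial direction is to show that if the principal family admits a strict reduction then no irreducible representation can reach $|P|$ ground elements, for which I would develop the equality (extremal) analysis underlying \Cref{thm:iir-bound}. Granting this, deciding irreducibility of a single, explicitly given representation reduces to running the reduction procedure from that proof, which I take to be constructive: in polynomial time it either certifies that no strict reduction exists or outputs one.

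For $\cw$ and $\dim_2$ the same template applies, with the admissible competing representations restricted: for $\cw$ one only compares against families all of whose sets have size at most $\ch(P)$, and for $\dim_2$ one ignores set sizes and merely tries to lower the number of filter-blocks. I would characterize $\cw(P)=|P|$ by combining the irreducibility criterion with the cube-height cap (for an antichain $\ch=1$ forces singletons, hence $|P|$ coordinates, the paradigm tight case), and $\dim_2(P)=|P|$ by non-compressibility of the principal cover by a single block, thereby recovering the classical $2$-dimension characterization from the same machinery. In each case the resulting condition is a statement about $P$ ranging over $O(|P|^{2})$ candidate pairs or elements, each tested by a feasibility check --- existence of a filter containing a prescribed set and avoiding a prescribed ideal, i.e.\ an up-closure/reachability computation --- together with a global counting check, all polynomial.

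The main obstacle is precisely that compressibility is genuinely global rather than local. Deleting or merging two principal filters is not the only way to save a coordinate: a five-element antichain has $\dim_2=4<5$ although no two of its principal filters can be merged into a separating family, the saving coming instead from a Sperner-type spreading of the elements across the middle layer of $2^{[4]}$. Consequently no criterion phrased purely in terms of a move on two elements can be correct; a valid test must also detect this \q{there is enough room} phenomenon, governed by thresholds such as $\binom{s}{\lfloor s/2\rfloor}$. Overcoming this is where I expect the comprehensive structural results to do the real work: they should guarantee that whenever $p(P)<|P|$ the compression is witnessed by a certificate of bounded, polynomially verifiable shape, so that the seemingly global obstruction is nonetheless decidable by inspecting $P$ directly. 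Once the equality cases of \Cref{thm:iir-bound} and of the $2$-dimension bound are pinned down in this form, \Cref{thm:poly} follows by assembling the three tests and invoking the upward propagation of tightness.
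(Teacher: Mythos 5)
There is a genuine gap, and it sits exactly where you place your trust in the machinery. Your framework is sound and your key lemma is the right one: the reduction of the $\iir$ question to irreducibility of the canonical representation is precisely \Cref{cor:miir-canonical} in the paper, and the upward propagation of tightness along $\dim_2\le\cw\le\iir\le|P|$ is also how the paper organizes the case analysis. But the one concrete algorithmic claim you make --- that irreducibility of the canonical representation can be decided ``by running the reduction procedure from that proof, which I take to be constructive'' --- does not hold up. The reduction step (\Cref{lem:key-step}) itself requires producing an \emph{irreducible} reduction of an auxiliary representation of a derived poset $Q'$, so using it as a decision procedure for irreducibility is circular; and in general, certifying that \emph{no} strict reduction exists means quantifying over all competing set families, which is not a priori a polynomial-time search. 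The proofs of \Cref{thm:iir-bound} and \Cref{cor:irr-IR-downsets} use these reductions only existentially, inside a proof by contradiction; they do not yield an algorithm.

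The paper escapes this by replacing the dynamic question (``does a strict reduction exist?'') with static, brute-force-checkable conditions on $P$ itself: for $\iir$, the No Block is a Chain, Two Down, and Parallel Pair Properties (\Cref{thm:char:iir}), each a first-order condition over $O(|P|^2)$ pairs; for $\dim_2$ and $\cw$, membership in the explicit classes $\MTD$ and $\MCW$ read off from the block decomposition (\Cref{thm:char:dim2,thm:char:cw}), whose blocks must come from the short list $\bbA_{2,3,4}\cup\bbB\cup\{Z\}$ (respectively $\bbA\cup\bbB\cup\{Z\}$). You correctly diagnose the obstacle --- compressibility is global, as the five-element antichain shows, so no purely local two-element criterion can work --- but your proposal then defers the resolution to unspecified ``comprehensive structural results.'' Those results are the entire content of \Cref{thm:poly}: extracting the three properties (and proving their sufficiency via \Cref{lem:3-props-downset}) and pinning down the exceptional small blocks is the hard work, and it is not present in the proposal.
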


In the case of $p \in \{\dim_2,\cw\}$, we give a very precise description  of posets $P$ with $p(P) = |P|$.
On the other hand, the algorithm is less direct in the case of $p = \iir$.
See Theorems~\ref{thm:char:iir},~\ref{thm:char:dim2}, and~\ref{thm:char:cw} for the detailed statements.

\subsection{Outline of the paper}
The remainder of the paper is organized as follows.  
We start by fixing some more notation in~\Cref{sec:notation}.
In~\Cref{sec:main-proof}, we give a short and self-contained proof of~\Cref{thm:iir-bound} followed by several corollaries in~\Cref{sec:implications}.
The rest of the paper is devoted to proving~\Cref{thm:poly}.
First, in~\Cref{sec:combining-splitting} we provide some additional material on inclusion representations.
In~\Cref{sec:char-thms-iir}, we characterize posets $P$ with $\iir(P) = |P|$ (in other words, we prove~\Cref{thm:poly} for $p = \iir$).
In~\Cref{sec:dimcw}, we characterize posets $P$ with $\dim_2(P) = |P|$ and we characterize posets $P$ with $\cw(P) = |P|$ (in other words, we prove~\Cref{thm:poly} for $p \in \{\dim_2,\cw\}$).
Finally, in~\Cref{sec:conclusion}, we suggest further research directions by revisiting the topic of structural properties of those posets $P$ for which $\iir(P)=|P|$.

\subsection{Basic notation and conventions} \label{sec:notation}
All the poset parameters we discuss in this paper are the 
same whenever $P$ and $Q$ are isomorphic posets.  Accordingly, we abuse 
notation slightly and say $P=Q$ when $P$ and $Q$ are isomorphic.  Also, we say
$P$ contains $Q$ when $Q$ is isomorphic to a subposet of $P$.
We treat subset of elements of a poset as posets with induced order relation.

Let $P$ be a poset.
We write \defin{$x\in P$} when $x$ is a member of the
ground set of $P$.
When $x$ and $y$ are distinct incomparable elements of 
$P$, we will write \defin{$x\parallel y$} in $P$.  When
$x\in P$, we let \defin{$D_P(x)$} consist of all elements $y\in P$ such that 
$y< x$ in $P$.  
Moreover, \defin{$D_P[x]$} is defined to be $D_P(x)\cup\{x\}$.  The sets
\defin{$U_P(x)$} and \defin{$U_P[x]$} are defined dually. 
A subposet $Q$ of a poset $P$ is called a
\defin{down set} (resp.\ \defin{up set}) in $P$ if for all $u,v \in P$ with $u < v$ in $P$ $u\in Q$ implies $v\in Q$ (resp.\ $v\in Q$ implies $u\in Q$).

We use standard terminology regarding covers, i.e., we say $y$ \defin{covers}
$x$ in $P$ when $x<y$ in $P$ 
and there is no element $z$ of $P$ with
$x<z<y$ in $P$.  Every element $y$ of $P$ that is not a minimal element covers
at least one element of $P$.  Later in the paper, a key detail will hinge
on whether $y$ covers at least two elements of $P$.

\medskip
\section{Proof of Theorem~\ref{thm:iir-bound}} \label{sec:main-proof}
\medskip

The key part of the proof is the following technical lemma.

\begin{lemma}\label{lem:key-step}
    Let $P$ be a poset, let $\cgS = \inrep{S_x : x \in P}$ be an inclusion representation of $P$, and let $y \in P$.
    Let $Q = P - D_P[y]$, and let $Q'$ be a poset with the ground set $\{S_x-S_y : x \in Q\}$ ordered by inclusion.
    Let $\varepsilon \in \{0,1\}$ be the number of unique minimal elements of $Q'$.
    Then, there exists an inclusion representation $\cgS' = \inrep{S_x' : x \in P}$ of $P$ with $|S_x'| \leq |S_x|$ for every $x \in P$ and
        \[\left|\bigcup\cgS'\right| \leq \iir(Q') + \varepsilon + |S_y|.\]
\end{lemma}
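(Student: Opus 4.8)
The plan is to build $\cgS'$ by treating $S_y$ as a fixed block and handling the two parts of $P$ separately, namely $D_P[y]$ (the elements below $y$) and $Q = P - D_P[y]$. I would work on a disjoint ground set $A \cup B$, where $A = S_y$ and $B$ is a fresh copy of the ground set of a suitable inclusion representation $\cgR^* = \inrep{R^*_v : v \in Q'}$ of $Q'$. For $x \le y$ I set $S_x' = S_x$ (a subset of $S_y = A$, so its size is unchanged), and for $z \in Q$ I set $S_z' = (S_z \cap S_y) \cup R^*_{S_z - S_y}$, recording the part inside $S_y$ verbatim and encoding the part outside $S_y$ through $\cgR^*$. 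Since $A$ and $B$ are disjoint, $|\bigcup\cgS'| \le |A| + |B| = |S_y| + |\bigcup\cgR^*|$, so the whole argument reduces to producing $\cgR^*$ with $|\bigcup\cgR^*| \le \iir(Q') + \varepsilon$.

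Next I would record the three properties that $\cgR^*$ must satisfy and check that they make $\cgS'$ an inclusion representation of $P$ with $|S_x'| \le |S_x|$ for all $x$. The properties are: (i) no $R^*_v$ is empty; (ii) $|R^*_v| \le |v|$ for every $v \in Q'$; and (iii) $|\bigcup\cgR^*| \le \iir(Q') + \varepsilon$. The size bound $|S_z'| \le |S_z|$ for $z \in Q$ follows from (ii), since the two pieces of $S_z'$ lie in disjoint parts of the ground set and $|S_z - S_y| = |v|$. Verifying that $\cgS'$ represents $P$ splits into three cases: for $a,b \le y$ it is inherited from $\cgS$ restricted to $A$; for $a,b \in Q$, inclusion of the $A$-parts captures $S_a \cap S_y$ versus $S_b \cap S_y$ and inclusion of the $B$-parts captures $S_a - S_y$ versus $S_b - S_y$, because $\cgR^*$ represents $Q'$; and the mixed case is exactly where (i) is needed. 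Indeed, for $a \le y$ and $b \in Q$ we have $b \not\le a$, and we must certify $S_b' \not\subseteq S_a' \subseteq A$, which holds precisely because the non-empty set $R^*_{S_b - S_y} \subseteq B$ sticks out of $A$.

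The heart of the proof, and the step I expect to be the main obstacle, is constructing $\cgR^*$. I would start from the identity representation $\cgI = \inrep{v : v \in Q'}$, in which each element of $Q'$ (literally a subset of $\bigcup\cgS$) is represented by itself; this is a valid inclusion representation of $Q'$ with $|I_v| = |v|$ and no empty set, but its ground set may be far too large. If $Q'$ has no minimum, then $\varepsilon = 0$, and I simply replace $\cgI$ by an irreducible representation $\cgR$ that is a reduction of $\cgI$: being a reduction gives (ii), being irreducible gives $|\bigcup\cgR| \le \iir(Q')$ and hence (iii), and the absence of a minimum forces (i), since an empty set would represent an element below all others. The delicate case is when $Q'$ has a unique minimal element $m'$, where $\varepsilon = 1$: an irreducible reduction of $\cgI$ may send $m'$ to the empty set, which would wreck the mixed case above.

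To finish this case I would pre-pay for the extra element. Fix $g \in m'$, which is possible because each $v = S_z - S_y$ with $z \in Q$ is non-empty (as $z \not\le y$ means $S_z \not\subseteq S_y$), so in particular $m' \ne \emptyset$. Since $m'$ is the minimum of $Q'$, we have $g \in m' \subseteq v$ for every $v \in Q'$, so $I'_v := v - \{g\}$ defines another inclusion representation $\cgI'$ of $Q'$ with $|I'_v| = |v| - 1$. Let $\cgR$ be an irreducible representation of $Q'$ that is a reduction of $\cgI'$, and then add a single fresh element $*$ to every set, setting $R^*_v = R_v \cup \{*\}$. Now every $R^*_v$ is non-empty, giving (i); moreover $|\bigcup\cgR^*| \le \iir(Q') + 1 = \iir(Q') + \varepsilon$, giving (iii); and $|R^*_v| = |R_v| + 1 \le (|v| - 1) + 1 = |v|$, giving (ii). The trick of deleting the common element $g$ before reducing and reinserting a fresh element afterwards is what reconciles the non-emptiness requirement (i) with the tight size budget (ii), and it is the crux of the argument; the remaining verifications and the degenerate case $Q = \emptyset$ are routine.
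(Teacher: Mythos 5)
Your proof is correct and follows essentially the same strategy as the paper's: keep $S_x\cap S_y$ verbatim, replace $S_x-S_y$ by an irreducible reduction of a shrunken representation of $Q'$, and spend one extra ground-set element to keep the sets representing $Q$ non-empty when $Q'$ has a unique minimal element. The only (immaterial) difference is that you delete a single element $g$ of the minimum of $Q'$ before reducing and re-insert a fresh element afterwards, whereas the paper deletes the entire minimum set $A$ and re-inserts a one-element subset $A'\subseteq A$.
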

\begin{proof} 
    Let $A = \emptyset$ when $Q'$ has no unique minimal element, and $A$ be the unique minimal element of $Q'$ otherwise.
    Note that in the latter case $A\neq \emptyset$ since otherwise $q \leq y$ in $P$ where $q \in P - D_P[y]$ is such that $S_q-S_y = A$ -- this is a contradiction.
    Clearly, $\cgT' = \inrep{\alpha - A : \alpha \in Q'}$ is an inclusion representation of $Q'$.
    Let $\cgR' = \inrep{R_\alpha' : \alpha \in Q'}$ be an irreducible inclusion representation of $Q'$ that is a reduction of $\cgT'$.
    In particular, $|\bigcup \cgR'|= \iir(Q')$.
    Assume that the ground sets of $\cgS$ and $\cgR'$ are disjoint.
    For every $x \in D_P[y]$, let $S_x' = S_x$. 
    Choose $A' \subset A$ arbitrarily so that $\varepsilon = |A'|$ and $A' \cap S_y = \emptyset$.
    For every $x \in P - D_P[y]$, let $S_x' = R_\alpha' \cup (S_x \cap S_y) \cup A'$ where $\alpha \in Q'$ is such that $\alpha = S_x-S_y$.
    We claim that $\cgS' = \inrep{S_x' : x \in P}$ is an inclusion representation of $P$ and $|S_x'| \leq |S_x|$ for every $x \in P$.

    Let $x,z \in P$.
    If $x \leq z$ in $P$, then $S_x' \subset S_z'$ by definition.
    Thus, assume $x \not< z$ in $P$.
    If $x,z \in D_P[y]$, then $S'_x \not\subset S'_z$ as $\cgS$ is an inclusion representation of $P$.
    If $x,z \notin D_P[y]$, then either $(S_x \cap S_y) \not\subset (S_z \cap S_y)$ or $(S_x - S_y) \not\subset (S_z - S_y)$.
    In the former case clearly $S_x' \not\subset S_z'$ and in the latter case we have $R_\alpha' \not\subset R_\beta'$, where $\alpha = S_x-S_y$ and $\beta = S_z-S_y$, and so, $S_x' \not\subset S_z'$.
    Next, suppose that $x \in D_P[y]$ and $z \notin D_P[y]$.
    If $S_x' \subset S_z'$, then $S_x \subset S_z \cap S_y \subset S_z$, which is not possible.
    Finally, assume $x \notin D_P[y]$ and $z \in D_P[y]$.
    If $Q'$ has no unique minimal element, then $R_\alpha \neq \emptyset$ for every $\alpha \in Q'$, and so, $S_x' \not\subset S_z'$.
    Otherwise, $A' \subset S_x' - S_y \subset  S_x' - S_z'$, which implies $S_x' \not\subset S_z'$.
    The above case analysis yields that $\cgS'$ is indeed an inclusion representation of $P$.
    Now, we argue that $\cgS'$ is a reduction of $\cgS$.
    For every $x \in D_P[y]$, we have $|S'_x| = |S_x|$, and for every $x \in P - D_P[y]$, we have 
    \[|S'_x| = |R_\alpha'| + |S_x \cap S_y| + |A'| \leq |\alpha - A| + |S_x \cap S_y| + |A'| = |S_x-S_y| - |A| + |S_x \cap S_y| + |A'| \leq |S_x|.\]
    Finally,
    \[\left|\bigcup\cgS'\right| \leq \left|\bigcup \cgR'\right| + |A'| + \left|S_y\right| = \iir(Q') + \varepsilon + |S_y|. \qedhere\]
\end{proof}

\begin{proof}[Proof of~\Cref{thm:iir-bound}]
    The proof is by induction on the number of elements of $P$.
    If $P$ is trivial, then the statement is clear.
    Suppose that $P$ is non-trivial and let $\cgS = \inrep{S_x : x \in P}$ be an irreducible inclusion representation of $P$.
    Additionally, suppose to the contrary that $|\bigcup \cgS| > |P|$.
    If $|D_P[x]| \leq |S_x|$ for every $x \in P$, then the canonical inclusion representation of $P$ is a strict reduction of $\cgS$, which contradicts the irreducibility of $\cgS$.

    Therefore, we can assume that there is $y \in P$ with $|D_P[y]| > |S_y|$.
    Observe that $y$ is not a unique maximal element in $P$ as otherwise $|S_y| = |\bigcup \cgS| > |P| = |D_P[y]|$.
    In particular, $Q = P - D_P[y]$ is non-empty.
    Consider a poset $Q'$ with the ground set $\{S_x - S_y : x \in Q\}$ equipped with the inclusion relation and let $\varepsilon \in \{0,1\}$ be the number of unique minimal elements of $Q'$.
    Note that by induction $\iir(Q') \leq |Q'|$.
    By~\Cref{lem:key-step}, there is an inclusion representation $\cgS' = \inrep{S_x' : x \in P}$ of $P$ with $|S_x'| \leq |S_x|$ for every $x \in P$ and
        \[\left|\bigcup\cgS'\right| \leq \iir(Q') + \varepsilon + |S_y| \leq |Q'| + 1 + (|D_P[y]-1) \leq |P| < \left|\bigcup\cgS\right|.\]
    This shows that $\cgS'$ is a strict reduction of $\cgS$, which is a contradiction that completes the proof.
\end{proof}

\subsection{Some implications}\label{sec:implications}

The bound can be slightly improved when a poset has a unique minimal element.

\begin{corollary} \label{cor:irr-unique-minimal-element}
    Every irreducible inclusion representation of every poset $P$ with a unique minimal element uses at most $|P| - 1$ elements.
\end{corollary}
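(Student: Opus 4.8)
The plan is to prove the slightly stronger statement that $\iir(P)\le |P|-1$ for every poset $P$ with a unique minimal element, by induction on $|P|$, reusing \Cref{lem:key-step} and \Cref{thm:iir-bound}. Since $P$ is finite, its unique minimal element $m$ is in fact a minimum, so $m\le x$ for every $x\in P$. Let $\cgS=\inrep{S_x:x\in P}$ be an irreducible inclusion representation. The first step is to observe that $S_m=\emptyset$: indeed, $S_m\subseteq S_x$ for all $x$, so any $a\in S_m$ lies in every set of $\cgS$, and deleting $a$ from all sets yields an inclusion representation that is a strict reduction of $\cgS$, contradicting irreducibility.

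Next I would apply \Cref{lem:key-step} with $y=m$. Here $D_P[m]=\{m\}$, the poset $Q=P-\{m\}$ is non-empty (as $|P|\ge 2$ in the inductive step), and the auxiliary poset $Q'$ has ground set $\{S_x-S_m:x\in Q\}=\{S_x:x\in Q\}$, so $|Q'|\le |P|-1$. Writing $\varepsilon\in\{0,1\}$ for whether $Q'$ has a unique minimal element, the lemma produces an inclusion representation $\cgS'$ with $|S_x'|\le|S_x|$ for all $x$ and
\[
  \left|\bigcup\cgS'\right|\le \iir(Q')+\varepsilon+|S_m| = \iir(Q')+\varepsilon,
\]
using $S_m=\emptyset$.

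The case analysis on $\varepsilon$ is where the two available bounds are combined. If $\varepsilon=0$, then \Cref{thm:iir-bound} gives $\iir(Q')\le|Q'|\le|P|-1$, hence $|\bigcup\cgS'|\le|P|-1$. If $\varepsilon=1$, then $Q'$ itself has a unique minimal element and is strictly smaller than $P$, so the inductive hypothesis yields $\iir(Q')\le|Q'|-1\le|P|-2$, and again $|\bigcup\cgS'|\le|P|-1$. In either case $\cgS'$ uses at most $|P|-1$ elements. Finally, if we had $|\bigcup\cgS|=|P|$ (the only possibility left open by \Cref{thm:iir-bound} beyond the desired bound), then $\cgS'$ would be a reduction of $\cgS$ with a strictly smaller ground set, i.e.\ a strict reduction, contradicting irreducibility. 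Therefore $|\bigcup\cgS|\le|P|-1$, completing the induction; the base case $|P|=1$ is immediate since then the only irreducible representation assigns the empty set to $m$.

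I expect the main obstacle to be recognizing that the $\varepsilon=1$ case cannot be closed using \Cref{thm:iir-bound} alone: the generic bound $\iir(Q')\le|Q'|$ only gives $|\bigcup\cgS'|\le|P|$, which is too weak. This is precisely what forces the inductive set-up, so that the gain of $1$ in the unique-minimal-element situation is fed back in through the induction hypothesis applied to $Q'$. The other point requiring care is justifying $S_m=\emptyset$, which is exactly what turns the $+|S_y|$ term of \Cref{lem:key-step} into $0$ and makes the estimate tight.
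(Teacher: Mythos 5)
Your proof is correct and follows essentially the same route as the paper's: induction on $|P|$, peeling off the unique minimal element $m$ (for which $S_m=\emptyset$ in any irreducible representation), and splitting on whether the remainder has a unique minimal element, with the inductive hypothesis supplying the extra $-1$ in that case. The only cosmetic difference is that you package the reconstruction through \Cref{lem:key-step} applied at $y=m$, whereas the paper carries out the (equivalent) construction by hand.
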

\begin{proof}
    We proceed by induction on $|P|$.
    In a one-element poset, we can represent the unique element as the empty set and such a representation is a reduction of any other.
    Let $P$ be a poset with a unique minimal element $y$ and $\cgS = \inrep{S_x : x \in P}$ be an irreducible inclusion representation of $P$.
    Let $Q = P - \{y\}$ and let $S_x' = S_x - S_y$ for every $x \in Q$.
    Clearly, $\cgS' = \inrep{S_x' : x\in Q}$ is an inclusion representation of $Q$.
    Let $\cgS'' = \inrep{S_x'' : x \in Q}$ be an irreducible reduction of $\cgS'$.
    By~\Cref{thm:iir-bound}, $|\bigcup \cgS''| \leq |Q|$.
    Moreover, if $Q$ has a unique minimal element, by induction, $|\bigcup \cgS''| \leq |Q|-1$.
    First, assume that $Q$ has no unique minimal element.
    Then, every set in $\cgS''$ is non-empty and by setting $T_x = S_x''$ for every $x \in Q$ and $T_y = \emptyset$, we obtain an inclusion representation $\inrep{T_x : x \in P}$ of $P$ that is a reduction of $\cgS$ and uses at most $|P|-1$ elements.
    Finally, when $Q$ has a unique minimal element, we set $T_y = \emptyset$, we pick $\gamma$ to be an element not used in any inclusion representation considered before, and we set $T_x = S_x'' \cup \{\gamma\}$ for every $x \in P - \{y\}$.
    Again, $\inrep{T_x : x \in P}$ is an inclusion representation of $P$,  a reduction of $\cgS$, and uses at most $|P|-1$ elements.
\end{proof}

The next two corollaries aim to give more insight into posets with $\iir(P) = |P|$.
Note that we do not obtain directly a polynomial time detection algorithm yet.

\begin{corollary}\label{cor:irr-IR-downsets}
    Let $P$ be a poset and let $\cgS = \inrep{S_x : x \in P}$ be an irreducible inclusion representation of $P$.
    If $|\bigcup \cgS| = |P|$, then $|S_x| = |D_P[x]|$ for every $x \in P$.
\end{corollary}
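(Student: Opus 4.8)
The plan is to compare $\cgS$ against the canonical inclusion representation $\cgC = \inrep{D_P[x] : x \in P}$, whose ground set $\bigcup\cgC = P$ has exactly $|P|$ elements. The heart of the matter is to establish the single inequality $|D_P[x]| \le |S_x|$ for every $x \in P$. Once this is in hand, the corollary follows quickly: since $|\bigcup\cgC| = |P| = |\bigcup\cgS|$ and $|D_P[x]| \le |S_x|$ for all $x$, the representation $\cgC$ is a reduction of $\cgS$. As $\cgS$ is irreducible, $\cgC$ cannot be a strict reduction of it, so $\cgC$ and $\cgS$ are equivalent. Equivalence yields the reverse inequalities $|S_x| \le |D_P[x]|$, and hence $|S_x| = |D_P[x]|$ for every $x$, as desired. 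Note that this route needs no separate induction; it rests entirely on the results already proved.

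It remains to prove the displayed inequality, which I would argue by contradiction. Suppose there is some $y \in P$ with $|D_P[y]| > |S_y|$. First observe that $y$ is not the maximum element of $P$: otherwise every $S_x$ would be contained in $S_y$, forcing $|S_y| = |\bigcup\cgS| = |P| = |D_P[y]|$, a contradiction. Hence $Q = P - D_P[y]$ is nonempty and $|Q| = |P| - |D_P[y]|$. Now I apply~\Cref{lem:key-step} to $\cgS$ and $y$, producing an inclusion representation $\cgS' = \inrep{S_x' : x \in P}$ of $P$ with $|S_x'| \le |S_x|$ for every $x$ and
\[
  \left|\bigcup\cgS'\right| \le \iir(Q') + \varepsilon + |S_y|,
\]
where $Q'$ is the poset on $\{S_x - S_y : x \in Q\}$ ordered by inclusion and $\varepsilon \in \{0,1\}$ counts its unique minimal elements.

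The main obstacle, and the crux of the argument, is to show that this bound lies strictly below $|P|$, so that $\cgS'$ becomes a strict reduction of $\cgS$. I would estimate the right-hand side using $|Q'| \le |Q| = |P| - |D_P[y]|$ together with $|S_y| \le |D_P[y]| - 1$. When $\varepsilon = 0$, \Cref{thm:iir-bound} gives $\iir(Q') \le |Q'|$, and the three estimates combine to yield $|\bigcup\cgS'| \le |P| - 1$. The delicate case is $\varepsilon = 1$, where the naive bound only gives $|P|$ and the extra $+1$ appears to spoil strictness; the key is that $\varepsilon = 1$ means $Q'$ has a \emph{unique} minimal element, so~\Cref{cor:irr-unique-minimal-element} sharpens the estimate to $\iir(Q') \le |Q'| - 1$. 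This improvement exactly cancels the $+\varepsilon$ term, and again $|\bigcup\cgS'| \le |P| - 1$. In either case $|\bigcup\cgS'| < |P| = |\bigcup\cgS|$, so $\cgS'$ is a strict reduction of $\cgS$, contradicting irreducibility. This contradiction establishes $|D_P[x]| \le |S_x|$ for every $x$ and completes the proof.
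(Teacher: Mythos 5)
Your proposal is correct and follows essentially the same route as the paper's proof: reduce to showing $|D_P[x]|\le|S_x|$ via comparison with the canonical representation, then derive a contradiction from a hypothetical $y$ with $|D_P[y]|>|S_y|$ by applying \Cref{lem:key-step} and splitting on $\varepsilon$, using \Cref{thm:iir-bound} when $\varepsilon=0$ and \Cref{cor:irr-unique-minimal-element} to absorb the $+1$ when $\varepsilon=1$. No gaps.
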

\begin{proof}
    Assume that $|\bigcup \cgS| = |P|$.
    Note that it suffices to show that $|D_P[x]| \leq |S_x|$.
    Indeed, if now any of the inequalities is strict, then the canonical inclusion representation of $P$ is a strict reduction of $\cgS$, which is a contradiction.
    Suppose to the contrary that there is $y \in P$ with $|D_P[y]| > |S_y|$.
    Observe that $y$ is not a unique maximal element in $P$ as otherwise $|S_y| = |\bigcup \cgS| = |P| = |D_P[y]|$, hence $Q = P - D_P[y]$ is non-empty.
    Consider a poset $Q'$ with the ground set $\{S_x - S_y : x \in Q\}$ equipped with the inclusion relation and let $\varepsilon \in \{0,1\}$ be the number of unique minimal elements of $Q'$.
    By~\Cref{lem:key-step}, there is an inclusion representation $\cgS' = \inrep{S_x' : x \in P}$ of $P$ with $|S_x'| \leq |S_x|$ for every $x \in P$ and
        \[\left|\bigcup\cgS'\right| \leq \iir(Q') + \varepsilon + |S_y| < (\iir(Q') + \varepsilon) + |D_P[y]|.\]
    Note that by~\Cref{thm:iir-bound}, $\iir(Q') \leq |Q'|$, which when $\varepsilon = 0$ yields $|\bigcup \cgS'| < |P|$.
    In particular, $\cgS'$ is a strict reduction of $\cgS$: a contradiction.
    On the other hand, when $\varepsilon = 1$, by~\Cref{cor:irr-unique-minimal-element}, $\iir(Q') \leq |Q'|-1$, which gives the same contradiction.
\end{proof}

\begin{corollary}\label{cor:miir-canonical}
  For every poset $P$, $\iir(P)=|P|$ if and only if the canonical inclusion representation of $P$ is irreducible.
\end{corollary}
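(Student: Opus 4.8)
The plan is to exploit two facts about the canonical inclusion representation $\cgC = \inrep{D_P[x]:x\in P}$: that its ground set is always exactly the whole poset, so $|\bigcup\cgC|=|P|$ (each $u\in P$ lies in $D_P[u]$, and no other elements are used), and that irreducibility is really a property of the equivalence class of a representation under the preorder given by reduction. With these in hand both implications become short.

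For the backward direction, suppose $\cgC$ is irreducible. Then $\cgC$ is an irreducible inclusion representation of $P$ with $|\bigcup\cgC|=|P|$, so by definition $\iir(P)\ge|P|$; combined with \Cref{thm:iir-bound}, which gives $\iir(P)\le|P|$, we conclude $\iir(P)=|P|$.

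For the forward direction, assume $\iir(P)=|P|$ and fix an irreducible inclusion representation $\cgS=\inrep{S_x:x\in P}$ with $|\bigcup\cgS|=|P|$ witnessing this. The key input is \Cref{cor:irr-IR-downsets}, which yields $|S_x|=|D_P[x]|$ for every $x\in P$. Since moreover $|\bigcup\cgS|=|P|=|\bigcup\cgC|$, we have $|S_x|=|D_P[x]|$ and equal ground-set sizes, so $\cgС$ is a reduction of $\cgS$ and $\cgS$ is a reduction of $\cgC$; that is, $\cgS$ and $\cgC$ are equivalent.

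It then remains to record the general observation that any representation equivalent to an irreducible one is itself irreducible. Here I would use that reduction is a transitive relation (its defining inequalities compose) and hence equivalence is an equivalence relation: if $\cgC$ admitted a strict reduction $\cgC'$, then $\cgC'$ would be a reduction of $\cgC$ and hence of $\cgS$, so irreducibility of $\cgS$ would force $\cgC'$ to be equivalent to $\cgS$, and therefore to $\cgC$, contradicting that $\cgC'$ is a strict reduction of $\cgC$. Thus $\cgC$ is irreducible. The only substantive ingredient is \Cref{cor:irr-IR-downsets}; the remaining work is bookkeeping with the reduction preorder, so I do not anticipate a genuine obstacle beyond stating the equivalence-class argument cleanly.
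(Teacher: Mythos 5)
Your proof is correct and follows essentially the same route as the paper: the backward direction uses $|\bigcup\cgC|=|P|$ together with \Cref{thm:iir-bound}, and the forward direction applies \Cref{cor:irr-IR-downsets} to an irreducible witness $\cgS$ to conclude that $\cgS$ and $\cgC$ are equivalent. Your explicit final step, that irreducibility transfers across equivalence via transitivity of the reduction preorder, is left implicit in the paper but is exactly the intended bookkeeping.
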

\begin{proof}
    The canonical inclusion representation $\cgC$ of a poset $P$ always satisfies $|\bigcup \cgC| = |P|$, hence, if $\cgC$ is irreducible, then $\iir(P) \geq |P|$, and so, $\iir(P) = |P|$ by~\Cref{thm:iir-bound}.
    On the other hand, if for a poset $P$, we have $\iir(P)=|P|$, then let us fix an inclusion representation $\cgS$ of $P$ with $|\bigcup \cgS| = |P|$.
    Then, by~\Cref{cor:irr-IR-downsets}, for every $x \in P$, we have $|S_x| = |D_P[x]|$.
    In particular, $\cgS$ is a reduction of $\cgC$ and $\cgC$ is a reduction of $\cgS$, thus, they are equivalent.
\end{proof}
Later (\Cref{cor:miir-canonical-isomorphism}), we prove that for a poset $P$ with $\iir(P) = |P|$, the canonical inclusion representation of $P$ is the only irreducible inclusion representation of $P$ with the ground set of size $|P|$ up to the isomorphism.

\medskip
\section{Combining and splitting inclusion representations} \label{sec:combining-splitting}
\medskip

Let $t$ be an integer with $t\ge2$, and let $(Q_1,\dots,Q_t)$ be a sequence 
of posets with disjoint ground sets.  Then  are (at least) two
natural ways to combine these posets into a larger poset.
In both cases, the ground set of the new poset is the union of the 
ground sets of the posets in the sequence.

First, the \defin{disjoint sum} of $(Q_1,\dots,Q_t)$, denoted \defin{$P=Q_1+\dots+Q_t$}, 
is the poset such that if $x,y\in P$, then we have $x\le y$ in $P$ if and 
only if there is some $i\in[t]$ such that $x,y\in Q_i$ and $x\le y$ in~$Q_i$. 
We say that a poset is a \defin{component}\footnote{In the literature, researchers
typically classify a poset as \defin{connected} or \defin{disconnected}.
A disconnected poset is the disjoint sum of its components.  We elect to use the more
compact terminology of simply refering to connected posets as components.}
when there does not exist non-empty
subposets $Q_1$ and $Q_2$ such that $P=Q_1+Q_2$.
It follows that when $P$ is not a component, there is a uniquely determined
integer $t$ with $t\ge2$ for which $P=Q_1+\dots+Q_t$, and for each $i\in[t]$, $Q_i$ 
is a component.  In this case, we refer to the expression $Q_1+\dots+Q_t$ as the 
\defin{component decomposition} of $P$.
Also, the posets in the sequence $(Q_1,\dots,Q_t)$ are called \defin{components of $P$}.

% \tom{The use of the word component to mean a connected poset is non-standard and
% deserves a comment. But I'm okay with this approach. Maybe someday, people
% follow our lead and will refer to connected graphs as components:)}\paul{I don't mind keeping the term component as it is shorter, but if linear sum as already been used maybe we should stick to it instead of introducing yet a new term.}

The \defin{vertical sum}\footnote{Vertical sums have appeared in the literature, 
and have also been called \defin{linear sums} and \defin{joins}. 
Both disjoint sum and vertical sums are special cases of a \defin{lexicographic 
sum}, but there does not appear to be any application of the more general 
definition in this setting.} of $(Q_1,\dots,Q_t)$,
denoted \defin{$P=Q_1<\dots<Q_t$} is the poset such that we have $x\le y\in P$ if and 
only if one of the following two conditions holds: 
(1)~there is some $i\in[t]$ with $x,y\in Q_i$ and $x\le y$ in $Q_i$; 
(2)~there are integers $i,j\in[t]$ with $i<j$ such that $x\in Q_i$ and $y\in Q_j$.  
We say that a poset $P$ is a \defin{vertical prime} when there does not 
exist posets $Q_1$ and $Q_2$ such that $P=Q_1<Q_2$.  

We say that a poset $P$ is 
a \defin{block} when it is either a chain or a vertical prime.  
It follows that when $P$ is not a block, there is a least integer $t$ with $t\ge2$ for 
which $P=Q_1<\dots<Q_t$, and for each $i\in[t]$, $Q_i$ is a block.   
In this case, we refer to the expression $Q_1<\dots<Q_t$ as the 
\defin{block decomposition} of $P$.  Also, the posets in the sequence 
$(Q_1,\dots,Q_t)$ are called \defin{blocks of $P$}.
Returning to Figure~\ref{fig:cw-fig1}, the poset $P$ has three blocks, while the
poset $Q$ has two blocks.

We state the following elementary result for emphasis.

\begin{proposition}\label{prop:poly-block-decomposition}
  There exists a polynomial time algorithm that for a poset $P$ 
  returns the block decomposition of $P$.
\end{proposition}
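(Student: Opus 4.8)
The plan is to reduce the block decomposition to locating all of the \emph{vertical cuts} of $P$, and to show that these are governed by the connected components of the incomparability graph. Call a partition of the ground set into nonempty sets $A,B$ a vertical cut if $a<b$ in $P$ for every $a\in A$ and $b\in B$; note that $A$ is then automatically a down set (if $c<a$ with $a\in A$ and $c\in B$, the cut condition forces $a<c$, a contradiction). Thus $P$ is a vertical prime exactly when it admits no vertical cut, and a vertical sum $P=Q_1<\dots<Q_t$ corresponds precisely to a nested family of vertical cuts. First I would record that the cuts are linearly nested: if $(A_1,B_1)$ and $(A_2,B_2)$ are cuts with $a\in A_1\setminus A_2$ and $a'\in A_2\setminus A_1$, then the first cut forces $a<a'$ and the second forces $a'<a$, which is impossible. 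Hence the lower parts $A$ form a chain $\emptyset=A_0\subsetneq A_1\subsetneq\dots\subsetneq A_m=P$, and the differences $A_i\setminus A_{i-1}$ are the uniquely determined vertical primes of the finest vertical decomposition.

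For the actual computation I would compute the comparability relation of $P$ by transitive closure, build the incomparability graph $G$ (one edge per incomparable pair), and find its connected components $C_1,\dots,C_r$. The key structural claim is that a set $A$ is the lower part of a vertical cut if and only if $A$ is a down set which is a union of components of $G$: no incomparability may cross a cut, and conversely any comparability crossing a union of components must point upward since $A$ is a down set. Using this I would prove that distinct components are totally comparable in a single direction, by a short induction along $G$-paths: if some element of $C$ lies below some element of $C'$, then walking a path inside $C$, and then a path inside $C'$, shows every element of $C$ lies below every element of $C'$ (at each step, incomparability with the neighbour rules out the reverse comparison). Consequently the components are linearly ordered; relabelling them as $C_1\prec\dots\prec C_r$ yields the finest vertical decomposition $P=C_1<\dots<C_r$, with each $C_i$ a vertical prime because its incomparability graph is connected.

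Finally I would pass from the finest decomposition to the block decomposition, i.e.\ to the least $t$, by merging consecutive singleton components. A consecutive run $C_a<\dots<C_b$ is a block exactly when it is a single component (a vertical prime) or all its components are singletons (so the run is a chain); a genuine vertical prime with at least two elements is neither a chain nor vertically indecomposable when joined to a neighbour, so it acts as a barrier. Hence the unique minimal block decomposition keeps each multi-element component as its own block and collapses each maximal run of singleton components into one chain block, which is a linear post-processing pass. Since transitive closure, component search, sorting the components, and the merging pass are all polynomial, this proves the claim.

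The step I expect to be the main obstacle is the structural lemma that the incomparability components are totally ordered and coincide with the vertical primes, as this is what makes the decomposition both well defined and efficiently computable; the direction-consistency induction along $G$-paths is the delicate part. Arguing that collapsing singleton runs yields the \emph{least} $t$, rather than merely some valid block decomposition, is a secondary but necessary bookkeeping point.
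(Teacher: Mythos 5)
Your proof is correct. Note that the paper offers no proof of this proposition at all---it is stated ``for emphasis'' as an elementary fact---so there is nothing to compare against; your write-up simply supplies the standard argument in full. The chain of reductions is sound: vertical cuts are exactly the pairs (down set, complement) in which the lower part is a union of connected components of the incomparability graph; the nestedness of cuts and the direction-consistency induction along $G$-paths both check out; and the passage from the finest decomposition $C_1<\dots<C_r$ to the \emph{least} $t$ is handled correctly, since a multi-element component cannot sit inside a chain block (it contains an incomparable pair) nor inside a larger vertically decomposable block (which would not be a vertical prime), so the unique minimal decomposition keeps each multi-element component as its own block and merges each maximal run of singleton components into one chain. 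All steps (transitive closure, component search, linearly ordering the components, the merging pass) are plainly polynomial. The only cosmetic remark is that the sentence asserting the differences $A_i\setminus A_{i-1}$ are ``the uniquely determined vertical primes'' appears before the component analysis that actually justifies it, but the justification does arrive, so this is an ordering issue rather than a gap.
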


% \jedrzej{Is this clear enough?
% \tom{Yes, and it's really very easy.  Not worth the space to write it down.  Just
% start with a linear extension.  Consider the gaps, and you find the factorization of
% $P$ into subposets that vertically prime.  Merge consecutive singletons to obtain the
% block decomposition.}
% \paul{I agree!}
% \jedrzej{Ok}

Next, we discuss how inclusion representations of components and blocks 
of a poset $P$ relate to inclusion representations of $P$.

\subsection{Inclusion representations of components}
Let $P$ be a poset that is not a component and let $P=Q_1+\dots+Q_t$ be 
the component decomposition of $P$.
Given an inclusion representation of $P$, we construct an inclusion 
representation of each component of $P$.
Assume that $\cgS = \inrep{S_x:x\in P}$ is an inclusion representation of $P$.
Note that all sets in $\cgS$ are non-empty.  
For each $i\in[t]$ and $x\in Q_i$, let $S_{i,x} = S_x$.
It follows that $\cgS_i = \inrep{S_{i,x}:x \in Q_i}$ is an inclusion representation 
of $Q_i$ for every $i \in [t]$.  We will refer the inclusion representations 
$\cgS_1,\dots,\cgS_t$ as the \defin{components} of $\cgS$.

Next, we show how to construct an inclusion representation of $P$ given 
inclusion representations of $Q_i$ for each $i \in [t]$.
For each $i\in[t]$, let $\cgS_i = \inrep{S_{i,x}:x \in Q_i}$ be an inclusion 
representation of $Q_i$.  Without loss of generality, we can assume that for all $i,j \in [t]$,
$\bigcup\cgS_i$ and $\bigcup\cgS_j$ are disjoint whenever $i \neq j$.  The most naive thing to do is just to assign 
$S_{i,x}$ to $x \in P$ where $x \in Q_i$.  However, such an assignment results
in an inclusion representation of $P$ if and only if all the sets are non-empty.
In the general case, we need something slightly more sophisticated.

Let $m$ denote the number of components of $P$ for which $\emptyset$ is one of the
sets in $\cgS_i$.  Up to a simple relabeling, we may assume that 
these components are $Q_1,\dots,Q_m$.  We have already noted that for each $i\in[m]$,
this implies that $Q_i$ has a unique minimal element. 
Choose an $m$-element set $\{a_1,\dots,a_m\}$ which is disjoint from
$\bigcup\cgS_j$ for all $j\in[t]$. 
Now, for every $x \in P$ with $i\in[t]$, we set $S_x = S_{i,x}$ when 
$i > m$ and $S_x = S_{i,x}\cup\{a_i\}$ when $i\leq m$.
Observe that $\cgS = \inrep{S_x:x \in P}$ is an inclusion representation of $P$.
In the remainder of the paper, we will write \defin{$\cgS=\cgS_1+\dots+\cgS_t$}, when
$\cgS$ has been constructed in this manner.

We conclude the discussion on disjoint sums with the following proposition that 
now follows immediately.

\begin{proposition}\label{lem:components}
  Let $P$ be a poset, which is not a component, and let 
  $P = Q_1+\dots+Q_t$ .  Let $m$ be the number of $i \in [t]$ such that 
  $Q_i$ has a unique minimal element.  When $m>0$, we assume that these components
  are $Q_1,\dots,Q_m$.  Then the following statements hold:
  \begin{enumerate}
    \item $\ch(P)=h_0$ if $\ch(Q_i) < h_0$ for all $i \in [m]$. 
      where $h_0 = \max\{\ch(Q_i): i \in [t]\}$; otherwise, $\ch(P)=h_0+1$.
    \item $\dim_2(P)\le \dim_2(Q_1)+\dots+\dim_2(Q_t)+m$.
    \item $\cw(P)\le \cw(Q_1)+\dots+\cw(Q_t)+m$.
    \item $\iir(P)\leq \iir(Q_1)+\dots+\iir(Q_t)+m$.\label{lem:components:msiir}
  \end{enumerate}
\end{proposition}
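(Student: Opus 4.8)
The plan is to treat the three inequalities in parts~(2)--(4) as upper bounds obtained by building a representation of $P$ out of representations of its components, and to treat part~(1) as a two-sided estimate. The common engine is the combining operation $\cgS_1+\dots+\cgS_t$ described just above the proposition: if each $\cgS_i$ is an inclusion representation of $Q_i$ with pairwise disjoint ground sets, the combined representation has ground set of size $\sum_{i\in[t]}\lvert\bigcup\cgS_i\rvert$ plus the number of components in which $\emptyset$ occurs. Since a representation of $Q_i$ contains $\emptyset$ only if $Q_i$ has a unique minimal element (an empty set is contained in every set, forcing a global minimum), that number is at most $m$, which is the source of the additive $+m$ throughout.

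For part~(2) I would pick for each $i$ a representation $\cgS_i$ with $\lvert\bigcup\cgS_i\rvert=\dim_2(Q_i)$ and combine them; the stated bound is then immediate. Part~(3) is the same construction applied to $\cw$-optimal representations, so that additionally $\lvert S_{i,x}\rvert\le\ch(Q_i)\le h_0$, with one extra verification: adding the distinguishing element to a unique-minimal component raises its set sizes to at most $\ch(Q_i)+1$, and I would check via part~(1) that this never exceeds $\ch(P)$. Indeed, if $\ch(Q_i)<h_0$ the sizes stay at most $h_0\le\ch(P)$, while if $\ch(Q_i)=h_0$ for a unique-minimal component, then part~(1) gives $\ch(P)=h_0+1$, so the sizes are again admissible. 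For part~(1) itself, the lower bound $\ch(P)\ge h_0$ is monotonicity, and the upper bound uses the same combining applied to minimum-height representations, which for a unique-minimal component may be chosen with empty core. The only genuine content is the strict inequality in the ``$+1$'' case, which I would obtain by contradiction: a height-$h_0$ representation of $P$ restricts to a representation of the offending component in which the unique minimal element receives a non-empty set (all sets in a disjoint-sum representation with $t\ge2$ are non-empty), and subtracting that common non-empty core lowers the height to at most $h_0-1$, contradicting $\ch(Q_i)=h_0$.

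Part~(4) is the main obstacle, because $\iir$ is a \emph{maximum} over irreducible representations, so merely exhibiting one small representation does not bound it. Instead I would fix an arbitrary irreducible representation $\cgS$ of $P$ and bound $\lvert\bigcup\cgS\rvert$. Split $\cgS$ into its components $\cgS_i$. For each $i$ with no unique minimal element, let $\cgR_i$ be an irreducible reduction of $\cgS_i$ on a fresh ground set; then $\lvert\bigcup\cgR_i\rvert\le\iir(Q_i)$, $\lvert R_{i,x}\rvert\le\lvert S_x\rvert$, and no empty set occurs. For each $i$ with a unique minimal element $z_i$, first pass to $\inrep{S_x\setminus S_{z_i}:x\in Q_i}$, a valid representation of $Q_i$ whose sets are smaller by $\lvert S_{z_i}\rvert\ge1$, and then let $\cgR_i$ be an irreducible reduction of it, so that $\lvert R_{i,x}\rvert\le\lvert S_x\rvert-1$ and $R_{i,z_i}=\emptyset$. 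Recombining yields $\cgS'=\cgR_1+\dots+\cgR_t$ with $\lvert\bigcup\cgS'\rvert\le\sum_{i\in[t]}\iir(Q_i)+m$.

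The crucial bookkeeping is the set-size inequality $\lvert S_x'\rvert\le\lvert S_x\rvert$ for all $x$: for a component without a unique minimal element nothing is added, while for a unique-minimal component the subtraction of the non-empty core $S_{z_i}$ creates exactly the slack needed to absorb the distinguishing element adjoined during recombination. Granting this, I would finish with a short dichotomy. If $\lvert\bigcup\cgS'\rvert\le\lvert\bigcup\cgS\rvert$, then $\cgS'$ is a reduction of the irreducible representation $\cgS$, hence equivalent to it, forcing $\lvert\bigcup\cgS\rvert=\lvert\bigcup\cgS'\rvert\le\sum_{i\in[t]}\iir(Q_i)+m$; otherwise $\lvert\bigcup\cgS\rvert<\lvert\bigcup\cgS'\rvert\le\sum_{i\in[t]}\iir(Q_i)+m$ directly. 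Since this holds for every irreducible representation, it bounds $\iir(P)$. I expect the set-size accounting for unique-minimal components, together with the structural point that here one must argue about all irreducible representations rather than a single constructed one, to be the part demanding the most care.
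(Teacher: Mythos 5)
Your proof is correct and follows essentially the same route as the paper, which states the proposition without proof as an immediate consequence of the componentwise splitting/combining constructions described just before it; your argument is simply the careful working-out of that, including the right handling of part~(4) via reducing an arbitrary irreducible representation component by component and invoking irreducibility.
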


\subsection{Inclusion representations of blocks} \label{sec:IR-blocks}
%We now turn our attention to vertical sums.
Let $P$ be a poset that is not a block, and let $P=Q_1<\dots<Q_t$ be the 
block decomposition of $P$.
First, let $\cgS=\inrep{S_x:x\in P}$ be an inclusion representation of $P$. We use
$\cgS$ to construct an inclusion representation of each block of $P$.
For every $i \in [t]$, let $W_i = \emptyset$ when $i = 1$; otherwise,
set $W_i = \bigcup\{S_y: y\in Q_{i-1}\}$.

Now let $x\in P$, and let $i$ be the unique integer in $[t]$ such that $x\in Q_i$. 
Define $S_{i,x} = S_x - W_i$.  Observe that for each $i\in[t]$, $\cgS_i = 
\inrep{S_{i,x}:x\in Q_i}$ is an inclusion representation of $Q_i$.
% In material to follow, we will refer to $\cgS_i$ as \defin{layer}~$i$ of $\cgS$.
% \jedrzej{We never use this notation.}

Next, we show how to construct an inclusion representation of $P$ 
given inclusion representations of $Q_i$ for each $i \in [t]$. For each $i\in[t]$,
let $\cgS_i = \inrep{S_{i,x}:x\in Q_i}$ be an inclusion representation of $Q_i$.
Without loss of generality, we can assume that $\bigcup\cgS_i$ and $\bigcup\cgS_j$ are 
disjoint whenever $i$ and $j$ are distinct integers in $[t]$.
We define an inclusion representation $\cgS=\inrep{S_x:x\in P}$ of $P$ as follows.
Let $x\in P$, and $i$ be the unique integer in $[t]$ such that $i\in Q_i$.  Then 
set:
\[
  S_x=S_{i,x}\cup\bigcup\{\cgS_j:j \in [i-1]\}. 
\]
It is easy to see that $\cgS = \inrep{S_x: x\in P}$ is an inclusion 
representation of $P$.  Furthermore, $\cgS$ is irreducible if and only 
if $\cgS_i$ is irreducible for every $i\in [t]$.

We again conclude the discussion of vertical sums with a nearly self-evident proposition.

\begin{proposition}\label{lem:vertical-sum}
  Let $P$ be a poset which is not a block, and let $P=Q_1<\dots<Q_t$ be
  the block decomposition of $P$.
  Then,
  \begin{enumerate}
    \item $\ch(P)=\dim_2(P-Q_t)+\ch(Q_t)$,
    \item $\dim_2(P)=\dim_2(Q_1)+\dots+\dim_2(Q_t)$,\label{lem:vertical-sum:dim2}
    \item $\cw(P)=\dim_2(P-Q_t)+\cw(Q_t)$,\label{lem:vertical-sum:cw}
    \item $\iir(P)=\iir(Q_1)+\dots+\iir(Q_t)$. \label{lem:vertical-sum:iir}
  \end{enumerate}
\end{proposition}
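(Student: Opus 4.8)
The plan is to prove all four identities by establishing matching upper and lower bounds, in every case exploiting the two constructions relating an inclusion representation of $P$ to inclusion representations of its blocks that are described just above the proposition. The structural fact that makes everything work is that these constructions interact additively with cardinalities. Fix an inclusion representation $\cgS = \inrep{S_x : x \in P}$ of $P$ and write $V_i = \bigcup\{S_x : x \in Q_i\}$. Since every element of $Q_{i-1}$ lies below every element of $Q_i$, we have $S_y \subseteq S_x$ for all $y \in Q_{i-1}$, $x \in Q_i$, so $W_i = V_{i-1} \subseteq S_x$ for every $x \in Q_i$ and hence $V_1 \subseteq V_2 \subseteq \dots \subseteq V_t = \bigcup\cgS$. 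Consequently the split representations $\cgS_i = \inrep{S_x - W_i : x \in Q_i}$ have pairwise disjoint ground sets $V_i - V_{i-1}$, and $\sum_{i \in [t]}\left|\bigcup\cgS_i\right| = \left|\bigcup\cgS\right|$; this telescoping identity will be used repeatedly. I also record that recombining the $\cgS_i$ reproduces $\cgS$ exactly: because $W_i \subseteq S_x$ and $\bigcup_{j < i}\bigcup\cgS_j = \bigcup_{j<i}(V_j - V_{j-1}) = W_i$, the combined set is $(S_x - W_i) \cup W_i = S_x$.

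For the $\dim_2$ and $\iir$ identities, both inequalities are short. The upper bounds come from the combining construction: given representations $\cgS_i$ of the blocks with disjoint ground sets, the combination is a representation of $P$ with ground set of size $\sum_i \left|\bigcup\cgS_i\right|$ (a disjoint union). Choosing each $\cgS_i$ to be $\dim_2$-optimal gives $\dim_2(P) \le \sum_i \dim_2(Q_i)$, and choosing each $\cgS_i$ irreducible and realizing $\iir(Q_i)$ gives, via the fact established above that the combination is irreducible exactly when every block representation is, $\iir(P) \ge \sum_i \iir(Q_i)$. For the reverse inequalities I start from an optimal representation $\cgS$ of $P$ and split it. The telescoping identity together with $\left|\bigcup\cgS_i\right| \ge \dim_2(Q_i)$ yields $\dim_2(P) \ge \sum_i \dim_2(Q_i)$. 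For $\iir$, a representation $\cgS$ realizing $\iir(P)$ is irreducible, and because recombining the $\cgS_i$ returns $\cgS$, the cited equivalence forces every $\cgS_i$ to be irreducible, so $\left|\bigcup\cgS_i\right| \le \iir(Q_i)$ and $\iir(P) \le \sum_i \iir(Q_i)$.

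The cube-height and cube-width identities (statements (1) and (3)) are handled by treating $P = (P - Q_t) < Q_t$ as a two-term vertical sum, the point being that every set assigned to an element of $Q_t$ carries a fixed overhead equal to the ground set $W = \bigcup\{S_y : y \in P - Q_t\}$ of the restriction to $P - Q_t$. For the upper bounds I combine a $\dim_2$-optimal representation of $P - Q_t$ with a representation of $Q_t$ that is optimal for $\ch(Q_t)$ (respectively $\cw(Q_t)$); each set assigned to an element of $Q_t$ then has size at most $\dim_2(P - Q_t) + \ch(Q_t)$, which bounds $\ch(P)$ and, in the width case, shows the combination is admissible, giving $\cw(P) \le \dim_2(P - Q_t) + \cw(Q_t)$. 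For the cube-height lower bound I take a witnessing $\cgS$ and split off $Q_t$; since $W \subseteq S_x$ for $x \in Q_t$ every top set obeys $|S_x - W| = |S_x| - |W|$, the maximum set size is attained on $Q_t$, and $|W| \ge \dim_2(P - Q_t)$ automatically (as $W$ is the ground set of a representation of $P - Q_t$), so $\ch(Q_t) \le \max_{x \in Q_t}|S_x - W| \le \ch(P) - \dim_2(P - Q_t)$.

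The main obstacle is the cube-width lower bound in statement (3), where the two constraints defining $\cw$ must be reconciled at once. From a $\cw$-witnessing representation $\cgS$ I only know $|S_x| \le \ch(P) = \dim_2(P - Q_t) + \ch(Q_t)$, and I must verify that the split representation $\cgS_t$ of $Q_t$ is \emph{admissible} for $\cw(Q_t)$, i.e. that all its sets have size at most $\ch(Q_t)$. This is exactly where the overhead is forced to be large enough: using $|W| \ge \dim_2(P - Q_t)$ I get $|S_x - W| = |S_x| - |W| \le \ch(P) - \dim_2(P - Q_t) = \ch(Q_t)$ for every $x \in Q_t$. Hence $\cgS_t$ is admissible, $\left|\bigcup\cgS_t\right| \ge \cw(Q_t)$, and since $\left|\bigcup\cgS\right| = |W| + \left|\bigcup\cgS_t\right| \ge \dim_2(P - Q_t) + \cw(Q_t)$, the width identity follows. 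I expect this size-balancing step to need the most care; the remainder is bookkeeping built on the telescoping identity and the combining construction.
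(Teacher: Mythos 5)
Your proof is correct and takes essentially the same route as the paper: the paper states this proposition as ``nearly self-evident'' after setting up the splitting and combining constructions for vertical sums in Section 4, and your argument is exactly the fleshed-out version of that, using the telescoping identity $\left|\bigcup\cgS\right|=\sum_i\left|\bigcup\cgS_i\right|$ for both directions of each equality. The one genuinely delicate step --- verifying that the split top-block representation is admissible for $\cw(Q_t)$ via $|W|\ge\dim_2(P-Q_t)$ and $|S_x-W|=|S_x|-|W|$ --- is handled correctly.
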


For posets expressible as certain vertical sums, we can easily get better bounds than in~\Cref{thm:iir-bound} using~\Cref{lem:vertical-sum}.\ref{lem:vertical-sum:iir}.

\begin{corollary}
    If $P$ is a poset with the block decomposition $Q_1<\dots<Q_t$ and $m$ is the number of blocks of $P$ that are chains, then $\iir(P) \leq |P|-m$.
\end{corollary}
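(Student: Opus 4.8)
The plan is to exploit the exact additivity of $\iir$ along the block decomposition, which is precisely \Cref{lem:vertical-sum}.\ref{lem:vertical-sum:iir}: it gives $\iir(P) = \iir(Q_1) + \dots + \iir(Q_t)$. Once this identity is in hand, the corollary reduces to bounding $\iir(Q_i)$ one block at a time and then observing that the chain blocks are exactly the ones that let us save one element apiece.

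For each block $Q_i$ I would distinguish two cases. If $Q_i$ is a vertical prime that is not a chain, I simply invoke \Cref{thm:iir-bound} to get $\iir(Q_i) \leq |Q_i|$. If $Q_i$ is a chain, then it is non-empty and has a least element, so $Q_i$ has a unique minimal element; \Cref{cor:irr-unique-minimal-element} then yields the sharper bound $\iir(Q_i) \leq |Q_i| - 1$. This is the only place where an improvement over \Cref{thm:iir-bound} enters, and it is exactly what pays for the term $-m$.

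Summing over $i \in [t]$ and using that the ground sets of the blocks partition that of $P$, so that $\sum_{i \in [t]} |Q_i| = |P|$, the $m$ chain blocks each contribute a $-1$ while the remaining blocks contribute nothing, giving $\iir(P) \leq |P| - m$. I do not expect a genuine obstacle here, since the argument is a direct combination of the additivity identity with the two upper bounds already established. The only points that warrant a moment's care are the degenerate conventions: a one-element poset counts as a chain and contributes $0 = |Q_i| - 1$, consistent with both bounds, and in the block decomposition a chain appears as a single block, so the count $m$ really is the number of chain \emph{blocks} rather than a refinement into one-element pieces.
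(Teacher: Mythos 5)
Your proof is correct and follows the same route the paper intends: the corollary is stated as an immediate consequence of \Cref{lem:vertical-sum}.\ref{lem:vertical-sum:iir}, combined with $\iir(Q_i)\le|Q_i|$ from \Cref{thm:iir-bound} for non-chain blocks and the $|Q_i|-1$ bound for chains (which the paper records as the exact value $\iir(C)=|C|-1$, and which you correctly rederive from \Cref{cor:irr-unique-minimal-element}). Nothing further is needed.
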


\medskip
\section{Characterization problems: \texorpdfstring{$\iir$}{iir}}\label{sec:char-thms-iir}
\medskip

Let $P$ be a poset and consider the following properties, each of which $P$ may or
may not satisfy:
\begin{itemize}
    \item \defin{No Block is a Chain Property:} If $x$ is an element of $P$, then there is an element $y \in P$ such that $x \parallel y$ in $P$.
    \item \defin{Two Down Property:} If $y$ is an element of $P$, and $y$ covers at least two distinct elements of $P$, then there is an element $z$ of $P$ such that $D_P(y)\subseteq D_P(z)$ and $y\parallel z$ in $P$.
    \item \defin{Parallel Pair Property:} If $x$ and $y$ are incomparable elements of $P$, then at least one of the following two statements holds: 
    (1)~there is an element $y'\in P$ with $D_P(x)\subseteq D_P(y')$, $y\le y'$ in $P$, and $x\parallel y'$ in $P$; 
    (2)~there is an element $x'$ of $P$ such that $D_P(y)\subseteq D_P(x')$, $x\le_P x'$ in $P$, and $y\parallel_P x'$ in $P$. 
\end{itemize}
Note that each of the properties for a given poset can be verified in polynomial time by simple brute-force algorithms.
The goal of this section is to prove the following result that yields~\Cref{thm:poly} for $p = \iir$.

\begin{theorem}\label{thm:char:iir}
    For a poset $P$, we have $\iir(P) = |P|$ if and only if $P$ satisfies the No Block is a Chain Property, the Two Down Property, and the Parallel Pair Property.
\end{theorem}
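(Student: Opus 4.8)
The plan is to invoke \Cref{cor:miir-canonical}, which reduces the statement to an equivalence: $\iir(P)=|P|$ holds if and only if the canonical inclusion representation $\cgC=\inrep{D_P[x]:x\in P}$ is irreducible. Accordingly, I would prove that $\cgC$ is irreducible precisely when all three properties hold, by matching each property to a family of reductions of $\cgC$ that it forbids. For the direction that the three properties are necessary, I argue by contrapositive: assuming one property fails, I exhibit an explicit strict reduction of $\cgC$, so that $\iir(P)<|P|$.

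The three reduction moves are as follows. If the No Block is a Chain Property fails, then some element is comparable to all others, which forces a chain in the block decomposition, and the corollary $\iir(P)\le|P|-m$ following \Cref{lem:vertical-sum} gives $\iir(P)\le|P|-1$. If the Two Down Property fails at $y$ — so $y$ covers at least two elements yet no $z$ with $y\parallel z$ satisfies $D_P(y)\subseteq D_P(z)$ — I delete from $\cgC$ the ground-set element $y$ (equivalently, shrink $S_y$ from $D_P[y]$ to $D_P(y)$). Because $y$ covers two distinct elements, no $w<y$ can satisfy $D_P(y)\subseteq D_P[w]$, so the only possible obstruction to validity is an incomparable $z$ with $D_P(y)\subseteq D_P(z)$; the failure of the property removes exactly this obstruction, and the deletion is a strict reduction. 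If the Parallel Pair Property fails at an incomparable pair $(x,y)$, I identify the two ground-set elements $x$ and $y$ into one (replacing the principal up-sets $U_P[x]$ and $U_P[y]$ by their union). This lowers $|\bigcup\cgC|$ by one while keeping every set size at most $|D_P[z]|$. Validity can only fail through a pair separated solely by $x$ (resp.\ solely by $y$), and such a pair forces exactly alternative~(1) (resp.\ alternative~(2)) of the property; since both alternatives are assumed to fail, no obstruction survives and the identification is a valid strict reduction.

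For sufficiency I would show that if all three properties hold then $\cgC$ admits no strict reduction. By \Cref{cor:irr-IR-downsets}, a strict irreducible reduction must use a ground set of size strictly less than $|P|$, so it suffices to rule out any inclusion representation $\cgS$ with $|\bigcup\cgS|<|P|$ and $|S_x|\le|D_P[x]|$ for all $x$. Here I would first reduce to the case that $P$ is a single block: in a block decomposition $P=Q_1<\dots<Q_t$ we have $\iir(P)=\iir(Q_1)+\dots+\iir(Q_t)$ by \Cref{lem:vertical-sum}.\ref{lem:vertical-sum:iir}, every parallel pair lies inside one block, and (after the common lower part cancels) the Two Down and Parallel Pair conditions for $P$ restrict to the corresponding conditions inside each $Q_i$, while No Block is a Chain says exactly that each $Q_i$ is a non-chain. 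For a single non-chain block I would then argue, using the model in which $\cgS$ is a family of fewer than $|P|$ separating up-sets with every $x$ lying in at most $|D_P[x]|$ of them, that such compression is impossible unless some element covering two elements is redundant (yielding a Two Down witness $z$) or some incomparable pair is forced to share a point (yielding a Parallel Pair witness $y'$ or $x'$), contradicting the assumed properties.

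The hard part is this last step of sufficiency: proving that a genuine saving in the ground set necessarily manifests as one of the two local configurations, i.e.\ that deleting a point, shrinking a set, and identifying two points are in effect the only ways to strictly reduce $\cgC$. I expect the delicate case to be disconnected blocks, where $P$ has no element comparable to all others yet cross-component incomparable pairs generate the Parallel Pair obligations; there the counting argument (fewer up-sets than elements) must be combined carefully with the component structure from \Cref{lem:components} in order to extract the violating cover or the violating pair.
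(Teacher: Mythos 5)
Your necessity direction is essentially the paper's argument (\Cref{lem:MIIR-3props-nec}): the chain-block count for the first property, shrinking $S_y$ to $D_P(y)$ for the Two Down Property, and identifying the two ground-set points $x$ and $y$ for the Parallel Pair Property is exactly the paper's substitution $T_u=\{x\}\cup(D_P[u]-\{y\})$ for $u\ge y$. Your verification that a failure of separation after the identification forces alternative (1) or (2) is correct, and the reduction via \Cref{cor:miir-canonical} and \Cref{cor:irr-IR-downsets} to ruling out representations with $|\bigcup\cgS|<|P|$ and $|S_x|\le|D_P[x]|$ is sound.

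The sufficiency direction, however, has a genuine gap, and it is precisely the step you flag as ``the hard part.'' You assert that any compression must ``manifest as one of the two local configurations,'' but you give no mechanism for extracting a Two Down or Parallel Pair witness from an arbitrary representation $\cgS$ with $|\bigcup\cgS|<|P|$ and $|S_x|\le|D_P[x]|$. The paper's mechanism is a rigidity lemma proved by induction on down sets (\Cref{lem:3-props-downset}): if $Q$ is a down set and $|S_x|=|D_P[x]|$ for all $x\in Q$, then the restriction of $\cgS$ to $Q$ is \emph{isomorphic to the canonical representation of $Q$}; the inductive step adds a maximal element of $Q$ and uses the Parallel Pair Property to show the one new point of $S_y$ cannot already name an element of $Q$ incomparable to $y$. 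With that in hand, one takes a \emph{minimal} element $y$ among those with $|S_y|<|D_P[y]|$, relabels so that $S_x=D_P[x]$ on $D_P(y)$, and concludes $S_y=D_P(y)$ exactly; if $y$ covers a single element $u$ this collides with $S_u$, and if $y$ covers two, the Two Down Property produces $z$ with $D_P(y)\subseteq S_z$, forcing $S_y\subseteq S_z$ against $y\parallel z$. Without an analogue of this down-set rigidity, your counting model (``fewer than $|P|$ separating up-sets'') does not pin the representation below the first violator to the literal canonical one, and so the violator's set is not forced into the shape $D_P(y)$ from which the contradiction is read off. Your proposed preliminary reduction to a single block is also not needed and introduces its own complications (e.g., a minimal element of a block $Q_i$ can cover two elements of $Q_{i-1}$ while covering none inside $Q_i$, so the Two Down Property does not cleanly restrict blockwise); the paper's argument handles the whole poset at once.
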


First, we illustrate the properties with some examples.
We again refer to~\Cref{fig:cw-fig1}.
Note that the poset $P$ violates
all three of these properties.  The element associated with the set
$\{1,2,3\}$ is comparable with all other elements of $P$;  the elements
associated with the sets $\{1,2,3\}$ and $\{1,2,3,4,5,6\}$ violate the
Two Down Property; and the elements associated with $\{1,2,3,4,6\}$ and
$\{1,2,3,5,6\}$ violate the Parallel Pair Property.  On the other hand, the
poset $Q$ does indeed satisfy all three.  Again, we note that $\iir(Q)=|Q|=8$.

\begin{lemma}\label{lem:MIIR-3props-nec}
  For a poset $P$, if $\iir(P)=|P|$, then $P$ satisfies the
  No Block is a Chain Property, the Two Down Property, and the
  Parallel Pair Property.
\end{lemma}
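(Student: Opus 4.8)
The plan is to prove the contrapositive for each property separately: if $P$ violates one of the three properties, then $\iir(P) < |P|$. By~\Cref{cor:miir-canonical}, we know that $\iir(P) = |P|$ holds if and only if the canonical inclusion representation $\cgC = \inrep{D_P[x] : x \in P}$ is irreducible. So it suffices to show that whenever $P$ violates one of the properties, the canonical representation admits a strict reduction, i.e.\ an inclusion representation $\cgS'$ with $|\bigcup \cgS'| < |P|$ (or, failing that, one with the same ground set size but some strictly smaller set, though by~\Cref{cor:irr-IR-downsets} the cleaner target is to shrink the total ground set below $|P|$).

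The first case is the easiest and sets the template. Suppose $P$ violates the No Block is a Chain Property, so some element $x$ is comparable to every other element of $P$. Then the singleton $\{x\}$ forms a one-element block, and by~\Cref{lem:vertical-sum}.\ref{lem:vertical-sum:iir} the parameter $\iir$ is additive across blocks. Since a one-element chain can be represented by the empty set, its contribution to $\iir$ is $0$ rather than $1$, so the block containing $x$ costs one fewer element than $|x|$ contributes to $|P|$; summing over the block decomposition gives $\iir(P) \le |P| - 1 < |P|$. (Indeed the Corollary immediately following~\Cref{lem:vertical-sum} already records exactly this: $\iir(P) \le |P| - m$ where $m$ counts chain-blocks.)

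For the remaining two properties, the strategy is local surgery on the canonical representation: assuming a witness to the violation, I would explicitly modify the sets $D_P[z]$ for a few elements $z$ so as to free up one ground-set element while preserving all the incomparabilities and comparabilities. For the Two Down Property, suppose $y$ covers at least two distinct elements but \emph{no} $z$ satisfies $D_P(y) \subseteq D_P(z)$ with $y \parallel z$. The intended reduction is to merge the contributions of two lower covers of $y$, exploiting that no incomparable element $z$ \q{sees} all of $D_P(y)$; concretely, one picks a ground element $a$ that appears only in down-sets forced below $y$ and shows it can be identified with (or absorbed into) another ground element without creating a spurious containment, because the only elements whose down-set contains $a$ are comparable to $y$. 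For the Parallel Pair Property, given incomparable $x, y$ with neither alternative (1) nor (2) holding, one argues symmetrically that a ground element witnessing $x \not\le y$ and one witnessing $y \not\le x$ can be coalesced: the failure of both clauses guarantees that no third element would be confused by the identification. In each subcase the verification is the same routine check performed in~\Cref{lem:key-step}: run through all pairs $u \not< v$ and confirm $S'_u \not\subseteq S'_v$ still holds, splitting on whether $u, v$ lie above, below, or incomparable to the witness elements.

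The main obstacle will be the Parallel Pair case. Unlike the Two Down violation, which concerns a single element $y$ and its down-set, a parallel-pair violation is a genuinely two-sided condition — the negations of both clauses (1) and (2) must be used \emph{simultaneously} to rule out every element that could be disturbed by coalescing the two witness elements. The delicate point is that after identifying a ground element $a$ (witnessing $x \not\le y$) with a ground element $b$ (witnessing $y \not\le x$), one must ensure no element $w$ with $x \le w$ or $y \le w$ accidentally acquires a containment it should not have; the failure of clauses (1) and (2) is precisely what forbids the existence of such a problematic $w$, but translating \q{$D_P(x) \not\subseteq D_P(y')$ for all suitable $y'$} into a clean statement about which down-sets contain $a$ requires care. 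I would expect to set up an auxiliary claim isolating exactly which elements' canonical sets contain the ground element being eliminated, and then let the property-violation hypotheses feed directly into that claim.
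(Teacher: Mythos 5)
Your overall frame is the same as the paper's: argue the contrapositive, invoke \Cref{cor:miir-canonical} to reduce everything to exhibiting a strict reduction of the canonical representation, handle the No Block is a Chain Property via additivity of $\iir$ over the block decomposition together with $\iir(C)=|C|-1$ for chains, and handle the Parallel Pair Property by coalescing the two canonical ground elements $x$ and $y$ (the paper sets $T_u=\{x\}\cup(D_P[u]-\{y\})$ for $u\ge y$ and $T_u=D_P[u]$ otherwise, which is exactly your identification; the two clause negations are indeed what kill the two possible spurious containments $T_u\subseteq T_v$ with exactly one of $u,v$ above $y$). Those two parts are essentially correct, modulo the small slip that the block containing $x$ need not be the singleton $\{x\}$ — it is merely a chain, which suffices.

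The genuine gap is in the Two Down Property. The surgery you describe — ``merge the contributions of two lower covers of $y$'' — does not work: if $u_1,u_2$ are lower covers of $y$ and you identify the ground elements $u_1$ and $u_2$, then any $w$ with $w\parallel u_1$, $u_2\le w$ and $D_P(u_1)\subseteq D_P[w]$ (for instance $u_1$ minimal and $w$ any element above $u_2$ incomparable to $u_1$) acquires $D'_P[u_1]\subseteq D'_P[w]$, and nothing in the negation of the Two Down Property for $y$ forbids such a $w$. Your fallback description (``a ground element $a$ whose up-set lies among the comparables of $y$'') points to $a=y$ itself, not to a lower cover, and is inconsistent with the first sentence. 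The correct move is both simpler and of a different kind: keep $S_x=D_P[x]$ for all $x\neq y$ and set $S_y=D_P(y)$. This does not shrink the ground set at all when $y$ is non-maximal — it only shrinks $|S_y|$ by one — but that already makes the family a strict reduction of $\cgC$, since $\cgC$ can no longer be a reduction of it. The verification is where both hypotheses enter and where your sketch is silent: for $z\parallel y$ one needs $D_P(y)\not\subseteq D_P(z)$, which is precisely the negated Two Down condition, and for $u<y$ one needs $D_P(y)\neq D_P[u]$, which is exactly what ``$y$ covers at least two elements'' guarantees (with a single lower cover $u$ one would have $S_y=D_P(y)=D_P[u]=S_u$ and the representation would break). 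Since you never locate the role of the two-covers hypothesis, and the reduction you do describe is invalid, this case of your argument is not yet a proof.
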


\begin{proof}
  We have three statements to prove, and each will be handled using an
  argument by contradiction.
  Let $P$ be a poset such that $\iir(P)=|P|$.

  Suppose first that $P$ does not satisfy the No Block is a Chain Property.
  Let $x$ be an element of $P$ for which there is no element $y$ of $P$ with
  $x\parallel y$ in $P$.  Let $P=Q_1<\dots<Q_t$ be the block decomposition of
  $P$, and let $i$ be the integer in $[t]$ such that $x\in Q_i$.  Then
  $Q_i$ is a chain. For a chain, $\iir(Q_i) = |Q_i| - 1 < |Q_i|$, and
  $\iir(P)=\sum_{i\in[t]} \iir(Q_i)$, it follows that $\iir(P)<|P|$. 
  The contradiction proves that $P$ must satisfy the No Block is a Chain Property.

  Now suppose that $P$ does not satisfy the Two Down Property.
  %Let $\cgC=\{C_x:x\in P\}$ be the canonical inclusion representation of $P$.  
  There is an element $y$ of $P$ such that $y$ covers at least 
  two elements of $P$ but there is no element $z$ of $P$ with $D_P(y)\subseteq 
  D_P(z)$ and $y\parallel z$ in $P$.  We form an inclusion representation 
  $\cgS=\inrep{S_x:x\in P}$ of $P$ as follows.  First, set $S_y=D_P(y)$.  
  Then for each $x\in P$ with $x\neq y$, set $S_x=D_P[x]$. 
  Since $y$ violates the Two Down Property, $\cgS$ is an inclusion representation of $P$.
  Moreover, $\cgS$ is a strict reduction of the canonical inclusion representation of $P$. 
  Therefore, \cref{cor:miir-canonical} implies that $\iir(P)<|P|$.
  This is a contradiction, hence, $P$ satisfies the Two Down Property.

  Finally, suppose that $P$ does not satisfy the Parallel Pair Property.
  %Again, we let $\cgC=\{C_x:x\in P\}$ denote the canonical inclusion representation of $P$.  
  Then there is an incomparable pair $x,y$ of 
  elements of $P$ for which neither of the two statements of the Parallel 
  Pair Property holds.  We form an
  inclusion representation $\cgT=\{T_u: u \in P\}$ of $P$ using the 
  following rules.  If $u\in P$ and $u\not\ge y$ in $P$, set $T_u=D_P[u]$; 
  if $u\ge y$ in $P$, set $T_u=\{x\}\cup (D_P[u]-\{y\})$. Now $\cgT$ is an inclusion representation of $P$, and note that $\bigcup \cgT$ does not contain $y$, therefore, it is a strict reduction of $\cgC$.  Again, \cref{cor:miir-canonical} implies $\iir(P)<|P|$.
  The contradiction proves that $P$ must satisfy the Parallel Pair Property,
  and with this observation, the proof of the lemma is complete.
\end{proof}

Now we turn our attention to showing that the three properties are sufficient.  
The argument requires a preliminary lemma.

\vbox{
\begin{lemma}\label{lem:3-props-downset}
  Let $P$ be a poset that satisfies the No Block is a Chain Property and the Parallel Pair Property.  Let 
  $\cgS=\inrep{S_x:x\in P}$ be an inclusion representation of 
  $P$. If $Q$ is a down set in $P$, $\cgT=\inrep{S_x:x\in Q}$, and $|S_x|=|D_P[x]|$ for every $x\in Q$,
  then $\cgT$ is isomorphic to the canonical inclusion representation
  of $Q$. 
\end{lemma}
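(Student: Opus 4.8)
The plan is to induct on $|Q|$, building along the way an explicit bijection $\phi\colon\bigcup\cgT\to Q$ that witnesses the isomorphism with the canonical inclusion representation of $Q$; that is, a bijection satisfying $\phi(S_x)=D_Q[x]$ for every $x\in Q$. Since $Q$ is a down set, $D_P[x]=D_Q[x]\subseteq Q$ for every $x\in Q$, so the hypothesis reads $|S_x|=|D_Q[x]|$. The base case $|Q|\le 1$ is immediate. For the inductive step, I would pick a maximal element $y$ of $Q$ and set $Q'=Q-\{y\}$; this is again a down set of $P$ (if $x\in Q'$ and $u\le x$ then $u\in Q$ by downward closure and $u\ne y$ by maximality of $y$). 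Applying the induction hypothesis to $Q'$ yields a bijection $\phi'\colon\bigcup\cgT'\to Q'$ with $\phi'(S_x)=D_P[x]$ for all $x\in Q'$, where $\cgT'=\inrep{S_x:x\in Q'}$.

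First I would analyse $S_y$. As $\cgS$ is an inclusion representation and every $u<y$ lies in $Q'$, we have $\bigcup_{u<y}S_u\subseteq S_y$, and $\phi'$ maps $\bigcup_{u<y}S_u$ bijectively onto $\bigcup_{u<y}D_P[u]=D_P(y)$. Hence $|\bigcup_{u<y}S_u|=|D_P(y)|=|S_y|-1$, so $S_y=\bigcup_{u<y}S_u\cup\{a\}$ for a single element $a$. If $a\notin\bigcup\cgT'$, then extending $\phi'$ by $\phi(a)=y$ gives the desired bijection $\bigcup\cgT=\bigcup\cgT'\cup\{a\}\to Q$: one checks $\phi(S_y)=D_P(y)\cup\{y\}=D_Q[y]$, while $\phi(S_x)=D_Q[x]$ is inherited for $x\in Q'$ since $a\notin S_x$. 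This closes the induction in the good case.

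The main obstacle is to rule out the bad case $a\in\bigcup\cgT'$. Here $a=(\phi')^{-1}(w)$ for a unique $w\in Q'$, and since $a\notin\bigcup_{u<y}S_u=(\phi')^{-1}(D_P(y))$ we get $w\notin D_P(y)$; as $y$ is maximal in $Q$, also $w\not>y$, so $w\parallel y$. The two key facts are that $a\in S_w$ (because $w\in D_P[w]=\phi'(S_w)$) and that, by the induction hypothesis, $S_w=\bigcup_{u<w}S_u\cup\{a\}$. I would then invoke the Parallel Pair Property for the incomparable pair $(y,w)$. In its first alternative there is $y'\in P$ with $D_P(y)\subseteq D_P(y')$, $w\le y'$, and $y\parallel y'$; then $\bigcup_{u<y}S_u\subseteq S_{y'}$ and $a\in S_w\subseteq S_{y'}$, so $S_y\subseteq S_{y'}$, forcing $y\le y'$, a contradiction. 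In its second alternative there is $x'\in P$ with $D_P(w)\subseteq D_P(x')$, $y\le x'$, and $w\parallel x'$; then $\bigcup_{u<w}S_u\subseteq S_{x'}$ and $a\in S_y\subseteq S_{x'}$, so $S_w\subseteq S_{x'}$, forcing $w\le x'$, again a contradiction. Thus the bad case cannot occur and the induction goes through.

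Two points deserve emphasis. The argument genuinely uses that $\cgS$ is an inclusion representation of all of $P$, not merely of $Q$: the witnesses $y'$ and $x'$ produced by the Parallel Pair Property may lie outside $Q$, yet their sets $S_{y'},S_{x'}$ are still controlled by the global representation. Finally, the active ingredient in the key step is the Parallel Pair Property; the No Block is a Chain Property is among the standing hypotheses and I would keep it available in case a degenerate subcase (for instance $y$ minimal, where the contradiction $S_y=\{a\}\subseteq S_w$ is even more direct) requires separate bookkeeping.
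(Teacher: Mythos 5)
Your proposal is correct and follows essentially the same route as the paper's proof: induct on $|Q|$ by deleting a maximal element $y$, observe that $S_y$ contains exactly one element beyond $\bigcup_{u<y}S_u$, handle the case where that element is fresh by extending the isomorphism, and otherwise use the Parallel Pair Property on the resulting incomparable pair to force an inclusion $S_y\subseteq S_{y'}$ or $S_w\subseteq S_{x'}$ contradicting incomparability. The only differences are cosmetic (you carry an explicit bijection where the paper relabels the ground set, and you apply the Parallel Pair Property with the roles of the pair swapped, which is immaterial since its two alternatives are symmetric).
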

}
\begin{proof}
  We argue by contradiction.  A counterexample is a triple $(P,\cgS,Q)$ for which
  the hypothesis is satisfied, but $\cgT$ is not isomorphic to the canonical
  inclusion representation of $Q$.  Of all counterexamples,
  we choose one for which $|Q|$ is minimum.

  We note that since $P$ satisfies the No Block is a Chain
  Property, it has at least two minimal elements.  Therefore, in any
  inclusion representation of $P$, all sets are non-empty.
  In particular, the statement of the lemma holds when $|Q| = 1$, and so, from now on, we assume that in the chosen counterexample $|Q| \geq 2$.

  Let $y$ be a maximal element of $Q$ and let $Q' = Q - \{y\}$.
  We have $|Q'| < |Q|$, hence, the assertion of the lemma holds for $(P,\cgS,Q')$, and so, $\cgT' = \inrep{S_x : x \in R}$ is isomorphic to the canonical inclusion representation of $Q'$.
  By relabeling if necessary, we can assume that the elements in $\bigcup \cgS$ are labeled so that $S_x = D_P[x]$ for every $x \in Q'$.

  By assumption, $|S_y| = |D_P[y]|$, and so, $|S_y| > |D_P(y)|$.
  Therefore, there is a unique element $x \in S_y - D_P(y)$.
  If $x \notin Q - \{y\}$, then we relabel $x$ to be $y$ in $\cgS$ and we obtain that $\cgT$ is isomorphic to the canonical inclusion representation of $Q$.
  However, we assumed that this is false, and also, $x \notin D_P(y)$, hence, $x \in Q - D_P[y]$.
  Since $y$ is a maximal element in $Q$, we have $y \not< x$ in $P$.
  Thus, $x \parallel y$ in $P$.
  By the Parallel Pair Property, one of the following two statements holds: 
   (1)~there is an element $y'\in P$ with $D_P(x)\subseteq D_P(y')$, $y\le y'$ in $P$, and $x\parallel y'$ in $P$; 
   (2)~there is an element $x'$ of $P$ such that $D_P(y)\subseteq D_P(x')$, $x\le_P x'$ in $P$, and $y\parallel_P x'$ in $P$. 
  When (1) holds, we have $x \in S_y \subset S_{y'}$, and so, $S_x = D_P[x] \subset S_{y'}$, which is a contradiction with $x \parallel y'$ in $P$.
  When (2) holds, we have $x \in D_P[x] = S_x \subset S_{x'}$, and so, $S_y = D_P(y) \cup \{x\} \subset S_{x'}$, which is a contradiction with $y \parallel x'$ in $P$.
    These contradictions complete the proof of the lemma.
\end{proof}
% \paul{We do not use the Two Down Property in \cref{lem:3-props-downset} proof, should we keep it in the statement then? Or at least write a small remark for the reader here.}
% \jedrzej{I removed it from the statement.}

\begin{proof}[Proof of~\Cref{thm:char:iir}]
  We have already shown (\Cref{lem:MIIR-3props-nec}) that the three properties 
  are necessary for $\iir(P)=|P|$ to hold.
  It remains only to show that they
  are sufficient.  We argue by contradiction.  Let $P$ be a poset that
  satisfies all three properties but $\iir(P)<|P|$.
  Let $\cgC=\inrep{D_P[x]:x\in P}$ be the canonical inclusion representation
  of $P$.
  \Cref{cor:miir-canonical} ensures the existence of $\cgS=\inrep{S_x:x\in P}$ an inclusion representation
  of $P$ that is a strict reduction of $\cgC$.  
  By definition of reduction, we have $|\bigcup\cgS|\le|P|$ and $|S_x|\le|D_P[x]|$ for every $x\in P$.
  If $|S_x| = |D_P[x]|$ for every $x \in P$, then by~\Cref{lem:3-props-downset} applied to $Q = P$, $\cgS$ is isomorphic to $\cgC$, which contradicts with the reduction being strict.
  Therefore, there is $v \in P$ such that $|S_v| < |D_P[v]|$.
  Let $V$ be the (non-empty) set consisting of all such $v \in P$ and let $y \in P$ be a minimal element of $V$.

  Since $P$ satisfies the No Block is a Chain Property, it has at least two
  minimal elements.  Accordingly, all sets in $\cgS$ are non-empty.
  Therefore $y$ is not a minimal element of $P$.  However, since $y$ is
  minimal element of $V$, we know that $|S_x|=|D_P[x]|$ for every $x\in D_P(y)$.
  By~\Cref{lem:3-props-downset} applied to $Q=D_P(y)$, it
  follows that the restriction of $\cgS$ to $D_P(y)$ is isomorphic to the canonical inclusion representation of $D_P(y)$.  
  By renaming elements in the representation, without loss of generality, we can assume that $S_x = D_P[x]$ for every $x \in D_P(y)$.
  Since $S_x\subseteq S_y$ for every $x\in D_P(y)$, it follows that $D_P(y)\subseteq S_y$.
  In particular, $|D_P(y)|\le |S_y|<|D_P[y]| = |D_P(y)| + 1$, hence, $S_y=D_P(y)$.

  If $y$ covers exactly one element in $P$, say $u$,
  then $S_u=D_P[u] = D_P(y)= S_y$, which is impossible.  
  We conclude that $y$ covers at least two elements in $P$.
  Since $P$ satisfies the Two Down Property, there
  is an element $z\in P$ such that $y\parallel z$ in $P$, and $D_P(y)\subseteq
  D_P(z)$.  
  In particular, we have $D_P(y) \subset S_z$.
  However, this forces $S_y\subseteq S_z$, which contradicts the fact that $y$ and $z$ are incomparable.
  With this observation, the proof of the theorem is complete.
\end{proof}

As promised, we strengthen~\Cref{cor:miir-canonical} by showing that for a poset $P$ with $\iir(P) = |P|$, the canonical inclusion representation of $P$ is the only irreducible inclusion representation of $P$ with the ground set of size $|P|$ up to the isomorphism.

\begin{corollary}\label{cor:miir-canonical-isomorphism}
  Let $P$ be a poset with $\iir(P) = |P|$ and let $\cgS=\inrep{S_x: x\in P}$ be an irreducible inclusion representation
  of $P$ with $|\bigcup\cgS|=|P|$.
  Then, $\cgS$ is isomorphic to
  the canonical inclusion representation of $P$.
\end{corollary}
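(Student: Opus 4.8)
The plan is to combine the two facts already established: by \Cref{cor:irr-IR-downsets}, since $\iir(P) = |P|$ and $\cgS$ is an irreducible inclusion representation with $|\bigcup\cgS| = |P|$, we automatically have $|S_x| = |D_P[x]|$ for every $x \in P$. The goal is then to upgrade this size-matching into an actual isomorphism of inclusion representations, and the natural tool for that is \Cref{lem:3-props-downset} applied with $Q = P$ (taking $P$ itself as a down set in $P$). That lemma concludes exactly that $\cgS$ is isomorphic to the canonical inclusion representation of $P$, which is the desired statement. So structurally the corollary is a one-line deduction \emph{provided} the hypotheses of \Cref{lem:3-props-downset} are met.

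The main obstacle is therefore verifying that $P$ satisfies the two structural hypotheses required by \Cref{lem:3-props-downset}, namely the No Block is a Chain Property and the Parallel Pair Property. This is where I would invoke \Cref{thm:char:iir}: since $\iir(P) = |P|$, the characterization theorem guarantees that $P$ satisfies all three properties (No Block is a Chain, Two Down, and Parallel Pair). In particular the two properties needed as input to \Cref{lem:3-props-downset} hold. I would thus first cite \Cref{thm:char:iir} to obtain these properties, and only then apply the down-set lemma.

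Putting it together, the argument I would write is short and reads as follows. First, because $\iir(P) = |P|$, \Cref{thm:char:iir} tells us that $P$ satisfies the No Block is a Chain Property and the Parallel Pair Property (among others). Second, because $\cgS$ is irreducible with $|\bigcup\cgS| = |P| = \iir(P)$, \Cref{cor:irr-IR-downsets} gives $|S_x| = |D_P[x]|$ for every $x \in P$. Now $Q = P$ is trivially a down set in $P$, and $\cgT = \inrep{S_x : x \in P}$ is just $\cgS$ itself, with $|S_x| = |D_P[x]|$ holding for every $x \in Q$. Hence all hypotheses of \Cref{lem:3-props-downset} are satisfied, and it follows directly that $\cgS$ is isomorphic to the canonical inclusion representation of $P$, completing the proof.

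I do not expect any genuinely hard step here: the substance was already invested in \Cref{lem:3-props-downset} and \Cref{thm:char:iir}, and this corollary is essentially a bookkeeping assembly of those results. The only subtle point to double-check is that the ground-set size condition $|\bigcup\cgS| = |P|$ in the hypothesis, together with irreducibility, correctly triggers \Cref{cor:irr-IR-downsets} so that the per-element size equalities $|S_x| = |D_P[x]|$ are available as the quantitative input to the down-set lemma.
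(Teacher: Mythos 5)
Your proposal is correct and follows exactly the same route as the paper's proof: invoke \Cref{cor:irr-IR-downsets} to get $|S_x|=|D_P[x]|$ for all $x$, use \Cref{thm:char:iir} to supply the No Block is a Chain and Parallel Pair Properties, and then apply \Cref{lem:3-props-downset} with $Q=P$. No gaps; the assembly is exactly the intended one.
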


\begin{proof}
    By~\Cref{cor:irr-IR-downsets}, $|S_x| = |D_P[x]|$ for every $x \in P$.
    By~\Cref{thm:char:iir}, $P$ satisfies the No Block is a Chain Property and the Parallel Pair Property.
    Applying~\Cref{lem:3-props-downset} with $Q = P$, we obtain that $\cgS$ is isomorphic to the canonical inclusion representation of $P$.
  %\paul{I am probably missing something obvious but why $S$ should be equivalent to the canonical inclusion representation?}
  %\jedrzej{I updated the proof.}
\end{proof}

\subsection{More structural insight into posets with \texorpdfstring{$\iir(P) = |P|$}{iir(P) = |P|}} \label{sec:structural}
We close this section with two technical results that will prove useful in the other two characterization problems.
Let \defin{$\MIIR$} (\defin{maximum irreducible inclusion representation} posets) be the class of all posets $P$ with $\iir(P) = |P|$.
Next, let \defin{$\NMIIR$} (\defin{nearly maximum irreducible inclusion representation} posets) be the class of all posets $Q$ such that 
\begin{enumerate}[label=(\roman*)]
    \item $\iir(Q)=|Q|$ or
    \item $\iir(Q)=|Q|-1$ and if $Q=R_1<\dots<R_s$ is the block decomposition of $Q$, then $R_1$ is a chain. \label{item:NMIIR:ii}
\end{enumerate}
Recall that for every chain $C$, we have $\iir(C) = |C|-1$, hence,~\ref{item:NMIIR:ii} implies that for every $i  \in \{2,\ldots,s\}$, we have $\iir(R_i)=|R_i|$.  
Observe that all chains belong to $\NMIIR$. 
Also, observe that posets in $\NMIIR$ with at least two minimal elements are in $\MIIR$

\begin{lemma}\label{pro:miir-upset}
  Let $P$ be a poset and let $Q$ is a non-empty up set of $P$.
  If $\iir(P)=|P|$, then $Q$ is in $\NMIIR$.
\end{lemma}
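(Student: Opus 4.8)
The plan is to show that $Q$ automatically inherits two of the three properties from \Cref{thm:char:iir}, and to control the possible failure of the third through the block decomposition of $Q$. Since $\iir(P)=|P|$, \Cref{lem:MIIR-3props-nec} tells us that $P$ satisfies the No Block is a Chain Property, the Two Down Property, and the Parallel Pair Property. First I would record two elementary consequences of $Q$ being an up set (an order filter): for $u,y\in Q$, $y$ covers $u$ in $Q$ if and only if $y$ covers $u$ in $P$; and for $x,y\in Q$, $x\parallel y$ in $Q$ if and only if $x\parallel y$ in $P$. Both follow from upward closure, since any element strictly between two elements of $Q$, or dominating an element of $Q$, already lies in $Q$.

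Using these, I would check that $Q$ satisfies the Two Down Property and the Parallel Pair Property. For the Two Down Property, suppose $y\in Q$ covers two elements $u_1,u_2\in Q$; then $y$ covers them in $P$, so $P$ supplies $z$ with $D_P(y)\subseteq D_P(z)$ and $y\parallel z$ in $P$, and since $u_1\in D_P(z)$ with $u_1\in Q$ upward closure forces $z\in Q$. Intersecting down sets with $Q$ gives $D_Q(y)\subseteq D_Q(z)$, and $y\parallel z$ survives to $Q$, so the property holds. The Parallel Pair Property transfers in the same way: the witness $y'$ (or $x'$) dominates $y$ (or $x$), hence lies in $Q$ by upward closure, and the required down-set inclusions survive intersection with $Q$. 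Thus $Q$ always satisfies these two properties.

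It remains to analyze the No Block is a Chain Property for $Q$. If it holds, then \Cref{thm:char:iir} applied to $Q$ gives $\iir(Q)=|Q|$, so $Q\in\MIIR$ and we are in case~(i). Otherwise some block of the decomposition $Q=R_1<\dots<R_s$ is a chain (equivalently, some element is comparable to all of $Q$). The heart of the argument is to localize every chain block to the bottom, i.e.\ to show $R_1$ is the only chain block. I would argue by contradiction: suppose $R_j$ is a chain block with $j\ge 2$, and let $b$ be its minimum. Then $b$ is comparable to every element of $Q$, and $b$ covers exactly the maximal elements of $R_{j-1}$. If $R_{j-1}$ had two maximal elements, $b$ would cover two elements of $Q$, so the Two Down Property of $Q$ would demand an element incomparable to $b$ — impossible. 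Hence $R_{j-1}$ has a greatest element $a$, which exhibits $R_{j-1}$ as a nontrivial vertical sum $(R_{j-1}-\{a\})<\{a\}$; but $R_{j-1}$ sits immediately below the chain block $R_j$, so by minimality of the block decomposition it must be a non-chain block, i.e.\ a vertical prime with at least two elements, a contradiction. This localization step is the main obstacle; the rest is bookkeeping.

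With at most one chain block, necessarily $R_1$, I would finish by computing $\iir(Q)$. By \Cref{lem:vertical-sum}.\ref{lem:vertical-sum:iir} we have $\iir(Q)=\sum_i\iir(R_i)$, where a chain block contributes deficiency exactly $1$. When $s=1$, $Q=R_1$ is a chain and lies in $\NMIIR$ directly; when $s\ge 2$, the up set $Q-R_1=R_2<\dots<R_s$ of $P$ has no chain block, so the case~(i) reasoning places it in $\MIIR$, whence $R_2,\dots,R_s$ contribute no deficiency. Therefore $\iir(Q)=|Q|-1$ with $R_1$ a chain, which is exactly case~(ii) of $\NMIIR$, completing the proof.
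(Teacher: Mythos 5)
Your proof is correct, but it takes a genuinely different route from the paper's. The paper argues by a minimal counterexample and works directly with inclusion representations: if $Q$ has at least two minimal elements and $\iir(Q)<|Q|$, a strict reduction of the canonical representation of $Q$ is lifted (by unioning each set with $D_P(v)\cap(P-Q)$) to a strict reduction of the canonical representation of $P$, contradicting \Cref{cor:miir-canonical}; if $Q$ has a unique minimal element, the bottom chain block is peeled off and minimality of the counterexample finishes the job. You instead use \Cref{thm:char:iir} as a black box: you show that the Two Down and Parallel Pair Properties are inherited by every up set (via the facts that cover relations and incomparabilities in an up set agree with those in $P$, and that the witnesses $z$, $y'$, $x'$ are forced into $Q$ by upward closure), and then you show that the only possible failure of the No Block is a Chain Property is a chain block sitting at the very bottom of $Q$ --- your localization argument, that a chain block $R_j$ with $j\ge 2$ would force $R_{j-1}$ to have a greatest element and hence fail to be a block, is the one genuinely new step and it is sound. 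The bookkeeping via \Cref{lem:vertical-sum}.\ref{lem:vertical-sum:iir} then gives exactly the deficiency of one permitted by case~(ii) of $\NMIIR$. What each approach buys: the paper's argument needs only \Cref{cor:miir-canonical} and stays entirely within the representation-lifting machinery of \Cref{sec:main-proof}, whereas yours is purely order-theoretic once the characterization theorem is available (which it is, since the lemma appears after \Cref{thm:char:iir}), and it yields as a by-product the reusable structural fact that the Two Down and Parallel Pair Properties are hereditary for up sets.
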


\begin{proof}
  We argue by contradiction.  A counterexample is
  a pair $(P,Q)$ where $P$ is a poset with $\iir(P)=|P|$, 
  $Q$ is a non-empty up set of $P$, and $Q$ does not
  belong to $\NMIIR$.  We choose a counterexample $(P,Q)$ such
  that $|Q|$ is minimum.
  %Let $\cgC=\{D_P[x]:x\in P\}$ be the canonical inclusion representation of $P$.  
  Since $\iir(P)=|P|$, $\cgC$ is irreducible by~\Cref{cor:miir-canonical}.  

  Suppose first that $Q$ has at least two minimal elements.  
  Since $Q$ does not belong to $\NMIIR$, we know $\iir(Q)<|Q|$.  It
  follows that there is an inclusion representation $\cgT=\inrep{T_v:v\in Q}$ 
  of $Q$, which is a strict reduction of the canonical inclusion 
  representation for $Q$.  Note that all sets in $\cgT$ are non-empty.
  Also, without loss of generality, assume that the $\bigcup \cgT$ is disjoint from $P$.
  We form an inclusion representation $\cgS=\inrep{S_x:x\in P}$ of $P$
  using the following rules. 
  If $x\in P-Q$, then $S_x=D_P[x]$;
  and if $v\in Q$, then $S_v=T_v\cup [D_P(v)\cap (P-Q)]$.
  Since all the sets in $\cgT$ are non-empty, $\cgS$ is indeed an inclusion representation of $P$.
  Moreover, since $\cgT$ is a strict reduction of the canonical inclusion representation of $Q$, it follows that $\cgS$ is a strict reduction of the canonical inclusion representation of $P$, which is a contradiction, implying that $Q$ has a unique minimal element.  

  Let $Q=R_1<\dots<R_s$ be the block decomposition of $Q$.  
  In particular, $R_1$ is a chain.  
  If $s=1$, then $Q$ is a chain, and so, $Q$ is in $\NMIIR$.  
  It follows that $s\ge2$.   Furthermore, $Q-R_1$ is an up set of $P$.  By minimality of $(P,Q)$ the lemma holds for the pair 
  $(P,Q-R_1)$, therefore $Q-R_1$ is in $\NMIIR$. 
  Note that $Q-R_1$ has at least two minimal elements, hence, $Q - R_1 \in \MIIR$ and thus $Q \in \NMIIR$.  The contradiction completes the 
  proof.
  % Moreover if $\iir(Q-R_1) < |Q-R_1|$, then 
  % \[\iir(Q) = \iir(R_1) + \iir(Q - R_1) < |R_1| - 1 + |Q - R_1| = |Q| - 1\] 
  % which would contradict the fact that $Q \in \NMIIR$.
  % Therefore, $Q-R_1 \in \MIIR$ and thus $Q \in \NMIIR$.  The contradiction completes the 
  % proof.
\end{proof}

\begin{lemma}\label{pro:miir-components}
  Let $P$ be a poset that is not a component.
  If $\iir(P)=|P|$, then at most one component of $P$ is 
  non-trivial and all components of $P$ are in $\NMIIR$.
\end{lemma}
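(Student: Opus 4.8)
The plan is to prove the two conclusions by separate arguments, writing the component decomposition as $P = Q_1 + \dots + Q_t$ with $t \ge 2$. For the statement that every component lies in $\NMIIR$, I would first note that each component is closed under comparability in $P$: if $v \in Q_i$ and $u < v$ in $P$, then $u$ and $v$ are comparable, hence lie in the same component, so $u \in Q_i$. Thus each $Q_i$ is a non-empty up set of $P$, and \Cref{pro:miir-upset} applied to the hypothesis $\iir(P) = |P|$ gives $Q_i \in \NMIIR$ immediately. (A more hands-on route also works: combining \Cref{lem:components}.\ref{lem:components:msiir} with \Cref{thm:iir-bound} and \Cref{cor:irr-unique-minimal-element} squeezes $|P| = \iir(P) \le \sum_i \iir(Q_i) + m \le |P|$, where $m$ counts the components with a unique minimal element, forcing equality throughout and pinning down $\iir(Q_i) = |Q_i|$ when $Q_i$ has at least two minimal elements and $\iir(Q_i) = |Q_i| - 1$ otherwise.)

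For the statement that at most one component is non-trivial, I would argue by contradiction, using that $P$ satisfies the Parallel Pair Property (by \Cref{lem:MIIR-3props-nec}, since $\iir(P) = |P|$). Suppose two components $Q_1$ and $Q_2$ each have at least two elements. A connected poset on at least two elements has no element that is at once maximal and minimal, since such an element would be incomparable to everything else and would split off as its own component; hence I can pick a maximal element $x$ of $Q_1$ and a maximal element $y$ of $Q_2$, both necessarily non-minimal, so that $D_P(x)$ and $D_P(y)$ are non-empty and contained in $Q_1$ and $Q_2$ respectively. Since $x$ and $y$ lie in different components, $x \parallel y$ in $P$.

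It then remains to feed $(x,y)$ into the Parallel Pair Property and rule out both alternatives. For alternative (1), any $y'$ with $y \le y'$ is comparable to $y$, hence lies in $Q_2$, and maximality of $y$ forces $y' = y$; but $D_P(x) \subseteq D_P(y)$ is impossible, as $D_P(x)$ is a non-empty subset of $Q_1$ while $D_P(y) \subseteq Q_2$ and $Q_1 \cap Q_2 = \emptyset$. Alternative (2) fails by the symmetric argument with $x$ and $y$ interchanged, and this contradiction finishes the proof. I expect the only genuine decision to be the choice of incomparable pair: maximal elements of two distinct non-trivial components are exactly what make both alternatives of the Parallel Pair Property collapse, and connectedness---used to ensure these maximal elements are non-minimal---is the single place where the non-triviality hypothesis truly enters.
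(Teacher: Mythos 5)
Your proof is correct and follows the same route as the paper: each component is a non-empty up set of $P$, so \Cref{pro:miir-upset} places it in $\NMIIR$, and maximal elements of two distinct non-trivial components would violate the Parallel Pair Property. The paper leaves the verification of that violation implicit; your spelled-out version (including the observation that a maximal element of a non-trivial component cannot be minimal, so its strict down set is non-empty and confined to that component) is exactly the intended argument, and your parenthetical alternative via \Cref{lem:components} is a reasonable bonus, though it would additionally need the small observation that a component with a unique minimal element has a chain as the first block of its block decomposition.
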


\begin{proof}
  Each component of $P$ is an up set of $P$, and therefore belongs
  to $\NMIIR$. 
  Now, suppose that there are two distinct non-trivial components $Q$ and $Q'$ of $P$.
  Let $y$ be a maximal element of $Q$ and let $y'$ be a maximal
  element of $Q'$. 
  The pair $(y,y')$ shows that $P$ does not satisfy the Parallel Pair Property.  
  This contradicts~\Cref{thm:char:iir} and completes the
  proof.
\end{proof}

% Readers will note that the converse of the preceding proposition holds,
% and we will comment on this detail in the next section. 
% \paul{I am not sure I am understanding this sentence correctly, was it suppose to be "does not hold"? A chain $R$ would satisfy both (1) and (2) but $\iir(R) = |R| -1$. So the converse is not true.}
% \jedrzej{A chain is a component.}
% \jedrzej{Although, do we comment on this later?}
% \jedrzej{I'd skip this sentence.}

\medskip
\section{Characterization problems: \texorpdfstring{$\dim_2$}{dim2} and \texorpdfstring{$\cw$}{cw}}
\label{sec:dimcw}
\medskip

In this section, we complete the proof of~\Cref{thm:poly}.
More precisely, for each parameter $p \in \{\dim_2,\cw\}$ we give a polynomial time algorithm that for every poset $P$ decides if $p(P) = |P|$.
Let us first state the criteria that we later prove.
To this end, we make the following definitions:
\begin{itemize}[label = $\circ$]
  \item let $Z$ be a four elements poset with three components (unique up to isomorphism);
  \item let $\bbA$ consists of all non-trivial antichains;
  \item let $\bbA_{2,3,4}$ consists of antichains of sizes $2$, $3$ and $4$;
  \item let $\bbB$ consists of all posets of the form $C+T$, where $C$ is a non-trivial chain and $T$ is a trivial poset;
  \item let $\MTD$ be the family of all posets $P$ such that if $P=Q_1<\dots<Q_t$ is the block decomposition of $P$, then $Q_i$ is a poset in $\bbA_{2,3,4}\cup\bbB\cup\{Z\}$, for every $i\in[t]$;
  \item let $\MCW$ be the family of all posets $P$ such that if $P=Q_1<\dots<Q_t$ is the block decomposition of $P$, then $Q_t$ is in $\bbA\cup\bbB\cup\{Z\}$, and, if $t>1$, then $P-Q_t$ is in $\MTD$.
\end{itemize}

\begin{theorem}\label{thm:char:dim2}
    For a poset $P$, we have $\dim_2(P) = |P|$ if and only if $P \in \MTD$.
\end{theorem}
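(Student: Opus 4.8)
The plan is to push the statement down to the level of individual blocks and then determine exactly which blocks $B$ satisfy $\dim_2(B)=|B|$. If $P=Q_1<\dots<Q_t$ is the block decomposition, then \Cref{lem:vertical-sum}.\ref{lem:vertical-sum:dim2} gives $\dim_2(P)=\dim_2(Q_1)+\dots+\dim_2(Q_t)$, while $|P|=|Q_1|+\dots+|Q_t|$. Since canonical representations witness $\dim_2(Q_i)\le|Q_i|$ for every $i$, the equality $\dim_2(P)=|P|$ holds if and only if $\dim_2(Q_i)=|Q_i|$ for every block $Q_i$. As $P\in\MTD$ means precisely that every block lies in $\bbA_{2,3,4}\cup\bbB\cup\{Z\}$, the theorem reduces to the block-level statement: for a block $B$, $\dim_2(B)=|B|$ if and only if $B\in\bbA_{2,3,4}\cup\bbB\cup\{Z\}$.

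For the \q{if} direction I would verify the three families directly (each is disconnected, hence a vertical prime, hence a block). For antichains, Sperner's theorem gives $\dim_2$ equal to the least $s$ with $\binom{s}{\lfloor s/2\rfloor}\ge|B|$, which equals $|B|$ exactly for $|B|\in\{2,3,4\}$. For $B=C+T\in\bbB$ with $|C|=k\ge2$, the isolated point must be incomparable to the bottom of the chain, so every set representing $C$ is non-empty; the strictly nested chain already needs $k$ ground elements and the point contributes one more, giving $\dim_2(B)=k+1=|B|$. For $Z$ I would check by a short case analysis that $Z$ does not embed into $2^{[3]}$ (any realization forces its $2$-chain onto a singleton below a $2$-set, after which the only two sets incomparable to both are themselves comparable, so the two isolated points cannot be placed), whence $\dim_2(Z)=4=|Z|$.

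For the \q{only if} direction, let $B$ be a block with $\dim_2(B)=|B|$. The chain $\dim_2\le\cw\le\iir\le|\cdot|$ forces $\iir(B)=|B|$, so $B\in\MIIR$ and satisfies the three properties of \Cref{thm:char:iir}; as chains satisfy $\dim_2=|B|-1$, the block $B$ is a vertical prime. I would next argue that $B$ must be disconnected, i.e.\ that a connected vertical prime is never in $\MIIR$: applying the Two Down Property to a maximal element covering at least two elements forces an incomparable \q{dominating twin} above the same down set, and iterating this shows a connected poset with the property splits as a vertical sum, contradicting primality. Once $B$ is disconnected, \Cref{pro:miir-components} yields that at most one component $C_0$ is non-trivial and all components lie in $\NMIIR$. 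If there is no non-trivial component, $B$ is an antichain, and Sperner bounds its size by $4$, placing it in $\bbA_{2,3,4}$. Otherwise $B$ is the disjoint sum of $C_0$ with $p\ge1$ isolated points, and the remaining task is to compute this $\dim_2$ and show it equals $|B|$ only when $C_0$ is a chain and $p=1$ (the family $\bbB$) or $C_0$ is a $2$-chain and $p=2$ (the poset $Z$).

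The main obstacle is exactly this last quantitative step in the \q{only if} direction, together with excluding connected vertical primes; neither follows formally from the $\iir$ results. One must set an embeddability lower bound against an explicit construction: whenever $C_0$ is not a chain, or $p\ge2$ with $C_0$ strictly larger than a $2$-chain, I would pack the $p$ pairwise-incomparable points into the \q{free upper levels} of a minimum non-empty representation of $C_0$, saving a ground-set element and certifying $\dim_2(B)<|B|$. Making these savings precise — matching the tightness for $\bbB$ and $Z$ exactly — is where the real work of the proof concentrates.
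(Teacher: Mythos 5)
Your reduction to blocks and your verification of the \q{if} direction match the paper (the paper reduces via \Cref{lem:vertical-sum}.\ref{lem:vertical-sum:dim2} and states the sufficiency as \Cref{pro:2d-cw-basic}). The problem is in the \q{only if} direction, and it is not merely the unfinished quantitative step you flag at the end: the intermediate lemma you propose is false. You claim that \q{a connected vertical prime is never in $\MIIR$,} to be proved by iterating the Two Down Property until the poset splits as a vertical sum. But \Cref{sec:conclusion} of the paper exhibits exponentially many height-$2$ posets $P(\sigma)$ that are simultaneously blocks, components, and members of $\MIIR$ (\Cref{fig:iir-block-component}), and a height-$6$ example as well. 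The Two Down Property only produces, for a suitable $y$, an incomparable $z$ with $D_P(y)\subseteq D_P(z)$; in the $P(\sigma)$ examples such a $z$ always exists without forcing any vertical decomposition. So disconnectedness cannot be extracted from the three $\iir$ properties alone. The paper instead uses the full strength of $\dim_2(P)=|P|$ (which implies $\cw(P)=|P|$, hence $\ch(P)=\max\{|D_P[x]|:x\in P\}$ by \Cref{pro:cw-ch}): assuming $P$ is a connected block, it orders the maximal elements by nested down-sets via the Parallel Pair Property, shows $|M|=2$ by explicitly building a representation $S_{y_1}=\{y_2,y_m\}\cup D_P(y_1)$ that omits $y_1$ from the ground set when $|M|\ge3$, and only then applies the Two Down Property to a carefully chosen $y$ to conclude $D_P(y_1)=D_P(y)$, contradicting primality. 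Without some argument of this kind your case split into \q{disconnected} versus \q{antichain} does not cover all blocks with $\dim_2=|P|$, and the proof does not close.

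The remaining gap you do acknowledge --- showing that a disjoint sum of a non-trivial component with isolated points has $\dim_2<|P|$ except for $\bbB$ and $Z$ --- is real work, and the paper handles it with three explicit constructions (first forcing the non-trivial component to be a chain, then treating $t\ge4$ and $t=3,n\ge3$ separately). Your plan of \q{packing the isolated points into free upper levels} is the right idea in spirit, but as written it is a program, not a proof.
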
 

\begin{theorem}\label{thm:char:cw}
    For a poset $P$, we have $\cw(P) = |P|$ if and only if $P \in \MCW$.
\end{theorem}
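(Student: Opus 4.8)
The plan is to strip off the top block and reduce the statement to a single block. Write the block decomposition as $P = Q_1 < \dots < Q_t$. If $t = 1$, then $P$ is itself a block and the theorem coincides with the key lemma stated below. If $t \ge 2$, then~\Cref{lem:vertical-sum}.\ref{lem:vertical-sum:cw} gives $\cw(P) = \dim_2(P - Q_t) + \cw(Q_t)$, while $|P| = |P - Q_t| + |Q_t|$. Since $\dim_2(P-Q_t) \le |P-Q_t|$ (canonical representations) and, by~\Cref{thm:iir-bound}, $\cw(Q_t) \le \iir(Q_t) \le |Q_t|$, the equality $\cw(P) = |P|$ holds if and only if both $\dim_2(P-Q_t) = |P-Q_t|$ and $\cw(Q_t) = |Q_t|$. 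By~\Cref{thm:char:dim2} the first equality is equivalent to $P-Q_t \in \MTD$. Matching this against the definition of $\MCW$, the entire theorem reduces to the following key lemma: for a block $B$, $\cw(B) = |B|$ if and only if $B \in \bbA \cup \bbB \cup \{Z\}$.

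For the forward (``if'') direction of the key lemma I would treat two cases. If $B$ is a non-trivial antichain, then $\ch(B) = 1$, and in any inclusion representation of an antichain no set can be empty (it would be contained in all the others); so under the size bound $1$ the sets are pairwise distinct non-empty singletons, the ground set has size at least $|B|$, and $\cw(B) = |B|$. If instead $B \in \bbB \cup \{Z\}$, then $B$ is a single block belonging to $\bbA_{2,3,4} \cup \bbB \cup \{Z\}$, so $B \in \MTD$ and~\Cref{thm:char:dim2} gives $\dim_2(B) = |B|$; together with $\dim_2(B) \le \cw(B) \le |B|$ this forces $\cw(B) = |B|$. This antichain case is exactly the source of the discrepancy between $\MTD$ and $\MCW$: the top block may be \emph{any} non-trivial antichain (all of $\bbA$), whereas lower blocks, feeding into $\cw(P)$ only through $\dim_2$, are restricted to $\bbA_{2,3,4}$.

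For the reverse direction, let $B$ be a block with $\cw(B) = |B|$. From $\cw(B) \le \iir(B) \le |B|$ we get $\iir(B) = |B|$, so by~\Cref{thm:char:iir} the poset $B$ satisfies the No Block is a Chain, Two Down, and Parallel Pair Properties. The first of these forces every element of $B$ to have an incomparable partner, so $B$ is not a chain; being a block, $B$ is therefore a vertical prime. If $B$ is an antichain, then $B \in \bbA$ and we are done. Otherwise $B$ is a non-antichain vertical prime, and it suffices to prove the implication $\cw(B) = |B| \Rightarrow \dim_2(B) = |B|$: once $\dim_2(B) = |B|$ is known,~\Cref{thm:char:dim2} applied to the single block $B$ places $B$ in $\bbA_{2,3,4} \cup \bbB \cup \{Z\}$, and the non-antichain hypothesis then yields $B \in \bbB \cup \{Z\}$.

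The hard part is precisely this implication, whose contrapositive asks: given a non-antichain vertical prime $B$ with $\dim_2(B) < |B|$, produce an inclusion representation with ground set of size less than $|B|$ in which every set has size at most $\ch(B)$. The difficulty is that a $\dim_2$-optimal representation minimizes the ground set but may use sets far larger than $\ch(B)$, and naively trimming those sets tends to re-enlarge the ground set; indeed the antichain case shows this trade-off can be unavoidable. The point I would exploit is that a non-antichain vertical prime carries enough forced containments, coming from its comparabilities together with the Two Down and Parallel Pair Properties guaranteed by $\iir(B) = |B|$, that some $\dim_2$-optimal representation can be taken with all sets of size at most $\ch(B)$; equivalently, I expect $\cw(B) = \dim_2(B)$ for every non-antichain vertical prime. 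Establishing this structural equality, rather than the routine bookkeeping in the block reduction, is where the real work of the proof lies.
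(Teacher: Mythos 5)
Your block reduction and the ``if'' direction are fine and match the paper (the unnumbered proposition following the statements of Theorems~\ref{thm:char:dim2} and~\ref{thm:char:cw}, together with \Cref{pro:2d-cw-basic}, do exactly this), but the entire substance of the theorem lies in the ``only if'' direction for blocks, and there your proposal has a genuine gap. You reduce that direction to the implication ``$\cw(B)=|B|\Rightarrow\dim_2(B)=|B|$ for non-antichain blocks'' and propose to obtain it from the stronger claim that $\cw(B)=\dim_2(B)$ for \emph{every} non-antichain vertical prime --- a claim you explicitly leave unproven (``I expect\dots''). Worse, that claim is false. Let $B$ be the disjoint sum of a two-element chain and an $n$-element antichain for large $n$. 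Then $B$ is a vertical prime (the antichain elements are incomparable to everything, so no decomposition $B=Q_1<Q_2$ with both parts non-empty exists) and is not an antichain, yet $\ch(B)=2$, so $\cw(B)=\Theta(\sqrt{n})$ (one needs $n$ pairwise incomparable sets of size at most $2$, plus the two chain sets), while $\dim_2(B)=\Theta(\log n)$ by Sperner's theorem. So the route you sketch cannot work as stated, and even the weaker implication you actually need is left without an argument. (There is also a mild circularity in leaning on \Cref{thm:char:dim2}: in the paper both characterizations are consequences of the single \Cref{thm:dim2-cw-block}, whose statement (S1) is itself nontrivial.)

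For comparison, the paper proves statement (S2) of \Cref{thm:dim2-cw-block} directly by contradiction, never passing through $\dim_2$. A counterexample block $P$ has $\iir(P)=|P|$, hence satisfies the No Block is a Chain, Two Down, and Parallel Pair Properties by \Cref{thm:char:iir}, and $\ch(P)=\max\{|D_P[x]|:x\in P\}$ by \Cref{pro:cw-ch}. The proof first rules out disconnected counterexamples: for every disconnected block outside $\bbB\cup\{Z\}$ it exhibits an explicit inclusion representation whose sets all have size at most $\ch(P)$ and whose ground set omits one element of $P$, giving $\cw(P)<|P|$ directly (note the ground set still has size $|P|-1$, so no statement about $\dim_2$ is being proved). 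It then shows that a connected counterexample block cannot exist at all, by analyzing its maximal elements (there are exactly two, with nested down sets, by the Parallel Pair Property) and deriving a contradiction from the Two Down Property. These constructions and the structural analysis are the real content of the theorem, and nothing playing their role appears in your proposal.
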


Recall that finding the block decomposition of a poset can be done in polynomial time.
In particular, testing for being in~$\MTD$ or $\MCW$ can done in polynomial time, hence Theorems~\ref{thm:char:dim2} and~\ref{thm:char:cw} imply~\Cref{thm:poly} for $p \in \{\dim_2,\cw\}$.

Let us start an immediate consequence of~\Cref{lem:vertical-sum} that allows us to restrict only to posets that are blocks.

\begin{proposition}
  Let $P$ be a poset which is not a block, and let $P=Q_1<\dots<Q_s$ be 
  the block decomposition of $P$.  Then the following statements hold:
  \begin{enumerate}
    \item $\dim_2(P)=|P|$ if and only if $\dim_2(Q_i)=|Q_i|$ for
      all $i\in[s]$;
    \item $\cw(P)=|P|$ if and only if $\dim_2(Q_i)=|Q_i|$ for
      all $i\in[s-1]$, and $\cw(Q_s)=|Q_s|$.
  \end{enumerate}
\end{proposition}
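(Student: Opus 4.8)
The plan is to derive both equivalences directly from the additivity formulas for $\dim_2$ and $\cw$ over vertical sums recorded in~\Cref{lem:vertical-sum}, combined with the two universal upper bounds $\dim_2(Q) \le |Q|$ and $\cw(Q) \le |Q|$ that hold for every poset $Q$ (the former witnessed by the canonical inclusion representation, the latter being~\Cref{thm:cw-bound}). The only structural fact needed beyond these is that a block decomposition partitions the ground set, so that $|P| = |Q_1| + \dots + |Q_s|$. The whole argument then reduces to the elementary observation that a sum of terms, each bounded above by a corresponding quantity, attains the total upper bound if and only if every term attains its own bound.

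For part~(1), I would invoke~\Cref{lem:vertical-sum}.\ref{lem:vertical-sum:dim2} to write $\dim_2(P) = \dim_2(Q_1) + \dots + \dim_2(Q_s)$. Since $\dim_2(Q_i) \le |Q_i|$ for each $i$ and $|P| = \sum_{i=1}^{s} |Q_i|$, this gives $\dim_2(P) \le |P|$ with equality precisely when $\dim_2(Q_i) = |Q_i|$ for every $i \in [s]$, which is exactly the claimed equivalence.

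For part~(2), I would first use~\Cref{lem:vertical-sum}.\ref{lem:vertical-sum:cw} to write $\cw(P) = \dim_2(P - Q_s) + \cw(Q_s)$. Next I would observe that the block decomposition of $P - Q_s$ is $Q_1 < \dots < Q_{s-1}$ (by uniqueness of the block decomposition, each $Q_i$ still being a block), so that applying the $\dim_2$ additivity once more yields $\dim_2(P - Q_s) = \dim_2(Q_1) + \dots + \dim_2(Q_{s-1})$; when $s = 2$ this is the trivial single-block case. Combining, $\cw(P) = \sum_{i=1}^{s-1} \dim_2(Q_i) + \cw(Q_s)$, and bounding each summand via $\dim_2(Q_i) \le |Q_i|$ and $\cw(Q_s) \le |Q_s|$ gives $\cw(P) \le |P|$ with equality exactly when $\dim_2(Q_i) = |Q_i|$ for all $i \in [s-1]$ and $\cw(Q_s) = |Q_s|$.

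I do not anticipate a genuine obstacle here: the content is entirely carried by~\Cref{lem:vertical-sum}. The only point demanding a little care is the bookkeeping in part~(2)---confirming that $P - Q_s$ has block decomposition $Q_1 < \dots < Q_{s-1}$ so that the $\dim_2$ formula applies verbatim, and handling the degenerate case $s = 2$ where $P - Q_s$ is itself a single block rather than a proper vertical sum.
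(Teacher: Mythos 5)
Your proposal is correct and matches the paper's intent exactly: the paper states this proposition without proof as an immediate consequence of \Cref{lem:vertical-sum}, and your derivation---additivity of $\dim_2$ and $\cw$ over the block decomposition combined with the termwise upper bounds $\dim_2(Q_i)\le|Q_i|$ and $\cw(Q_s)\le|Q_s|$---is precisely the intended argument. Your care about the case $s=2$ and about identifying the block decomposition of $P-Q_s$ is appropriate but does not change the route.
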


Now, the necessity of the conditions in Theorems~\ref{thm:char:dim2} and~\ref{thm:char:cw} follows from the following elementary result that can be easily verified.

\begin{proposition}\label{pro:2d-cw-basic}
  The following two statements hold.
  \begin{enumerate}
    \item If $P$ is a poset in $\bbA_{2,3,4}\cup\bbB\cup\{Z\}$,
      then $P$ is a block and $\dim_2(P)=|P|$.
    \item If $P$ is a poset in $\bbA\cup\bbB\cup\{Z\}$,
      then $P$ is a block and $\cw(P)=|P|$.
  \end{enumerate} 
\end{proposition}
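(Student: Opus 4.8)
The plan is to handle the two \q{block} assertions uniformly and then compute the two parameters family by family, leaning on the general bounds $\dim_2(P)\le\cw(P)\le\iir(P)\le|P|$ already available (the last inequality being~\Cref{thm:iir-bound}). First I would dispatch the claim that every listed poset is a block. Each such poset is disconnected: a non-trivial antichain splits into singleton components, a poset $C+T$ in $\bbB$ has exactly two components, and $Z$ (a two-element chain together with two isolated points) has three. A disconnected poset can never be a non-trivial vertical sum $Q_1<Q_2$ with both parts non-empty, since in such a sum every element of $Q_1$ lies below every element of $Q_2$, and then any fixed $w\in Q_2$ witnesses that the comparability graph is connected. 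Hence each of these posets is a vertical prime, and in particular a block.

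For statement~(1) it suffices to prove $\dim_2(P)\ge|P|$, since the reverse inequality is the canonical bound. For an antichain of size $n$, an inclusion representation on ground set $[s]$ is exactly an antichain in the Boolean lattice $B_s$, so by Sperner's theorem $\dim_2$ equals the least $s$ with $\binom{s}{\lfloor s/2\rfloor}\ge n$; for $n\in\{2,3,4\}$ that least value is $2,3,4$, giving $\dim_2(P)=|P|$. For $P=C+T\in\bbB$ with $|C|=k\ge2$, all sets of any representation $\cgS$ are non-empty (two components), so the strict chain $S_{c_1}\subsetneq\dots\subsetneq S_{c_k}$ forces $|S_{c_k}|\ge k$; as the point of $T$ is incomparable to $c_k$, its set contributes an element outside $S_{c_k}$, whence $|\bigcup\cgS|\ge k+1=|P|$. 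For $Z$ I would show $\dim_2(Z)\ge4$ directly: a representation on three ground elements would produce, for the chain $a<b$ and the isolated points $p,q$, two three-element antichains $\{S_b,S_p,S_q\}$ and $\{S_a,S_p,S_q\}$ in $B_3$; but the only three-element antichains of $B_3$ are its two middle layers, which would force $S_a=S_b$, a contradiction.

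For statement~(2), the case $P\in\bbB\cup\{Z\}$ is immediate: the equality $\dim_2(P)=|P|$ just proved, together with $\dim_2(P)\le\cw(P)\le|P|$, yields $\cw(P)=|P|$. For a non-trivial antichain $P$ of size $n$ one instead uses the height constraint: $\ch(P)=1$, so in any representation witnessing $\cw$ every set has size at most $1$; no set may be empty (it would be contained in every other), so all sets are distinct singletons and the ground set has size at least $n$, giving $\cw(P)=n=|P|$. The antichain and $\bbB$ computations are routine; the one genuinely case-specific step is the exceptional poset $Z$, where the lower bound $\dim_2(Z)\ge4$ rests on the particular combinatorics of antichains in $B_3$ rather than on a general inequality.
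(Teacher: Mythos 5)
Your proof is correct and complete; the paper itself offers no argument for this proposition, dismissing it as "easily verified," and your verification supplies exactly the elementary checks that are implicit there (the disconnectedness argument for blockhood, the Sperner computation for antichains already sketched in the paper's introduction, the chain-plus-point counting for $\bbB$, the $B_3$ antichain analysis for $Z$, and the $\ch(P)=1$ observation for $\cw$ of antichains, which also appears in the introduction). No gaps to report.
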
 

In the final proof we will need the following technical detail on cube height.

\begin{lemma}\label{pro:cw-ch}
Let $P$ be a poset.
  If $\cw(P)=|P|$, then $\ch(P)=
  \max\{|D_P[x]|:x\in P\}$.
\end{lemma}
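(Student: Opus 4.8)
The plan is to prove the two inequalities $\ch(P)\le M$ and $\ch(P)\ge M$ separately, writing $M=\max\{|D_P[x]|:x\in P\}$. The upper bound holds for every poset with no hypothesis required: the canonical inclusion representation $\cgC=\inrep{D_P[x]:x\in P}$ realizes $P$ with every set of size at most $M$, so by definition $\ch(P)\le M$. All the work therefore lies in the reverse inequality $\ch(P)\ge M$, and this is where the assumption $\cw(P)=|P|$ must be used.

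For the lower bound, I would start from an inclusion representation $\cgS=\inrep{S_x:x\in P}$ witnessing the cube width, so that $\bigl|\bigcup\cgS\bigr|=\cw(P)=|P|$ and $|S_x|\le\ch(P)$ for every $x\in P$. Passing to an irreducible reduction $\cgS'=\inrep{S_x':x\in P}$ of $\cgS$ (which exists since $P$ is finite) preserves the size control, giving $|S_x'|\le|S_x|\le\ch(P)$ for every $x$.

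The crucial step is to argue that $\cgS'$ still uses the full ground set, i.e.\ $\bigl|\bigcup\cgS'\bigr|=|P|$. This follows from the minimality built into the definition of cube width: because $|S_x'|\le\ch(P)$ for every $x$, the representation $\cgS'$ is itself a legal competitor in the definition of $\cw(P)$, so $\cw(P)\le\bigl|\bigcup\cgS'\bigr|$; combined with $\bigl|\bigcup\cgS'\bigr|\le\bigl|\bigcup\cgS\bigr|=|P|=\cw(P)$, this forces $\bigl|\bigcup\cgS'\bigr|=|P|$. Now $\cgS'$ is an irreducible inclusion representation of $P$ whose ground set has size exactly $|P|$, so \Cref{cor:irr-IR-downsets} applies and yields $|S_x'|=|D_P[x]|$ for every $x\in P$. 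Taking the maximum over $x$ gives $M=\max_x|D_P[x]|=\max_x|S_x'|\le\ch(P)$, which is exactly the bound we need; together with the trivial upper bound this gives $\ch(P)=M$.

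I expect the only point requiring genuine care to be the claim $\bigl|\bigcup\cgS'\bigr|=|P|$. It is tempting to assume the witnessing representation $\cgS$ is already irreducible, but that need not hold, and the argument must instead invoke the optimality of $\cw(P)$ to rule out any shrinking of the ground set under reduction. Once that is in place, \Cref{cor:irr-IR-downsets} does the remaining work, so no further case analysis or appeal to the structural characterizations (which come later and in fact depend on this lemma) is needed.
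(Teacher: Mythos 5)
Your proof is correct and follows essentially the same route as the paper: both hinge on applying \Cref{cor:irr-IR-downsets} to an irreducible representation that witnesses $\cw(P)=|P|$ with all sets of size at most $\ch(P)$, forcing $|S_x|=|D_P[x]|$ for every $x$. The only difference is cosmetic (direct proof of two inequalities versus contradiction), and you additionally spell out why an irreducible such witness with full ground set exists, a point the paper's proof takes for granted.
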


\begin{proof}
  Arguing by contradiction, we assume that $\cw(P)=|P|$ and $\ch(P)<
  \max\{|D_P[x]|:x\in P\}$.  Let $y$ be an element of $P$ with 
  $|D_P[y]|=\max\{|D_P[x]|:x\in P\}$.  Then let $\cgS=
  \inrep{S_x:x\in P}$ be an irreducible inclusion representation of $P$ with
  $|\bigcup\cgS|= \cw(P) = |P|$ and $|S_x|\le\ch(P)$ for every $x\in P$. 
  In particular, $|S_y|<|D_P[y]|$.  
  \Cref{cor:irr-IR-downsets} now forces
  $|\bigcup\cgS|<|P|$.  The contradiction completes the proof.
\end{proof}

We note that~\Cref{pro:cw-ch} may not hold for
a poset $Q$ for which $\cw(Q)<\iir(Q)=|Q|$, as evidenced by
the poset $Q$ shown on the right side of
Figure~\ref{fig:cw-fig1}.  We note that
$\ch(Q)=5$, $\max\{|D_Q[x]|:x\in Q\}=7$, and $\iir(Q)=|Q|=8$.

The following theorem completes the proofs of Theorems~\ref{thm:char:dim2} and~\ref{thm:char:cw}.

\begin{theorem}\label{thm:dim2-cw-block}
  Let $P$ be a poset that is a block.  Then
  the following statements hold.
  \begin{enumerate}[label=\rm{(S\arabic*)}, ref={Statement (S\arabic*)}]
    \item If $\dim_2(P)=|P|$, then $P\in\bbA_{2,3,4}\cup\bbB\cup\{Z\}$. \label{item:dim2}
    \item If $\cw(P)=|P|$, then $P\in\bbA\cup\bbB\cup\{Z\}$. \label{item:cw}
  \end{enumerate} 
\end{theorem}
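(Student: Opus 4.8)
The plan is to prove \ref{item:cw} first and then obtain \ref{item:dim2} from it almost for free. Since $\dim_2(P)\le\cw(P)\le|P|$, the hypothesis $\dim_2(P)=|P|$ forces $\cw(P)=|P|$, so once \ref{item:cw} is known we have $P\in\bbA\cup\bbB\cup\{Z\}$; it then remains only to decide which of these posets actually satisfy $\dim_2(P)=|P|$. By \Cref{pro:2d-cw-basic} every member of $\bbA_{2,3,4}\cup\bbB\cup\{Z\}$ does, whereas for an antichain of size $n\ge5$ Sperner's theorem gives $\dim_2$ equal to the least $s$ with $\binom{s}{\lfloor s/2\rfloor}\ge n$, which is strictly less than $n$. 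Hence among $\bbA\cup\bbB\cup\{Z\}$ precisely the antichains of size at least $5$ drop out, leaving $\bbA_{2,3,4}\cup\bbB\cup\{Z\}$, which is \ref{item:dim2}.

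For \ref{item:cw}, assume $P$ is a block with $\cw(P)=|P|$. The inequality chain together with \Cref{thm:iir-bound} gives $\iir(P)=|P|$, so $P\in\MIIR$ and, by \Cref{thm:char:iir}, $P$ satisfies the three properties; in particular the No Block is a Chain Property forces the block $P$ to be a vertical prime, and primeness plus that property gives at least two minimal and at least two maximal elements. Moreover \Cref{pro:cw-ch} yields the rigidity condition $\ch(P)=\max\{|D_P[x]|:x\in P\}$, and \Cref{cor:irr-IR-downsets} shows that any irreducible representation on $|P|$ points is forced to use the canonical sizes $|S_x|=|D_P[x]|$. These two facts are exactly what blocks the Sperner-style compression that is available for $\dim_2$: the height cap $\ch(P)$ keeps every set as large as in the canonical representation, so any saving of a ground label must come from \emph{reusing} existing labels rather than from packing elements into larger sets.

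I would then split on connectivity. If $P$ is connected I claim $\cw(P)<|P|$, contradicting the hypothesis; this is the heart of the argument. The key structural observation is that a connected block cannot be a forest: a poset in which every principal down-set $D_P[x]$ is a chain is a disjoint union of rooted trees, so a connected one has a minimum and is therefore a non-trivial vertical sum, contradicting vertical primeness. Consequently some $y\in P$ covers at least two elements, and the Two Down Property supplies $z\parallel y$ with $D_P(y)\subseteq D_P(z)$; combined with the Parallel Pair Property this furnishes two incomparable elements dominating the down-set of $y$. I would use such a configuration to modify the canonical representation so that one element borrows the private labels of its two dominating witnesses — exactly as in the six-element example following \Cref{thm:iir-bound}, where the element mapped to $\{1,2\}$ is re-encoded as $\{1,2,4,5\}$ using the private labels of the two elements above it — thereby deleting one ground label while respecting the height cap. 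The disconnected case is handled through \Cref{pro:miir-components}: $P=C+(\text{isolated points})$ with $C$ the unique possibly non-trivial component and $C\in\NMIIR$. If $C$ is trivial then $P$ is an antichain and lies in $\bbA$. If $C$ is non-trivial, each added point needs a nonempty set incomparable to every set of $C$ and to the other points and of size at most $\ch(P)$; a counting argument shows there is room for two or more such points inside the current ground set unless $C$ is a chain with a single extra point (giving $\bbB$) or $C=\mathbf{2}$ with exactly two points (giving $Z$), and in every other case $\cw(P)<|P|$.

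The main obstacle is the connected case, and more generally converting ``there is room to reuse a label'' into an explicit, fully verified representation. The component estimates of \Cref{lem:components} are only upper bounds and are provably not tight here — a disjoint sum of a suitable connected poset $C$ with $\cw(C)=|C|$ and one extra point satisfies the induced component inequality with equality yet has $\cw<|P|$ — so the saving cannot be read off from additivity and each reduction must be built by hand and checked against every incomparable pair. I expect the cleanest route is to phrase the reuse uniformly: given an element $y$ covering at least two elements, together with its Two-Down/Parallel-Pair witnesses, delete $y$'s private label and re-encode each incomparability of $y$ through a witness's private label, then verify that no new comparability is introduced. The disconnected non-chain subcase reduces to the same reuse applied inside $C$, while the chain subcases are pinned down by the rigidity $\ch(P)=\max\{|D_P[x]|:x\in P\}$, which forces the extra points to acquire fresh labels.
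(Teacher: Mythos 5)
Your scaffolding matches the paper's up to the point where the real work begins: deriving (S1) from (S2) via $\dim_2\le\cw$ and Sperner, passing to $\iir(P)=|P|$ and the three properties of \Cref{thm:char:iir}, invoking \Cref{pro:cw-ch} for the height cap, and splitting on whether $P$ is a component. Your disconnected case is in the right spirit, though the ``counting argument'' still has to be replaced by explicit representations (the paper first shows the unique non-trivial component must be a chain, then writes down concrete set systems for the residual cases $t\ge4$ and $t=3$, $n\ge3$, leaving exactly $\bbB$ and $Z$). Your observation that a connected block which is not a chain must contain an element covering two elements is correct and is a nice way to see it.

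The genuine gap is the connected case, which you yourself flag as unresolved, and your proposed repair does not go through as stated. First, the Two Down and Parallel Pair Properties do not furnish ``two incomparable elements dominating $D_P(y)$'': applying the Parallel Pair Property to the pair $(y,z)$ may return its second alternative, an element $x'$ \emph{above} $y$, which is useless for label borrowing. Second, even with two witnesses $z,z'$, re-encoding $S_y=D_P(y)\cup\{z,z'\}$ requires $\{z,z'\}\subseteq S_u$ for every $u>y$, i.e.\ $z,z'<u$; since $z\parallel y$ this fails whenever $y$ is not maximal, so the trick is confined to maximal elements. The paper does use exactly this borrowing, but only after showing (via the Parallel Pair Property) that the down-sets of the maximal elements are nested, and only to rule out $|M|\ge3$ by setting $S_{y_1}=\{y_2,y_m\}\cup D_P(y_1)$. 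The crux is the remaining case $|M|=2$, where no smaller representation is constructed at all: instead one chooses $y$ maximal subject to covering at least two elements, $y\le y_2$, and $I(y,y_2)$ being a chain, shows the Two Down witness of $y$ must be $y_1$, and deduces $D_P(y_1)=D_P(y)$, which contradicts $P$ being a vertical prime. That purely structural endgame is the missing idea; without it ``there is room to reuse a label'' cannot be converted into a contradiction, and your connected case does not close.
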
 

\begin{proof}
  We argue by contradiction.  Let $P$ be a poset that is a block for
  which (at least) one of the two statements of the theorem fails.

  We note that if $\dim_2(P)=|P|$, then $\cw(P)=|P|$.  Also, if
  $\cw(P)=|P|$, then $\iir(P)=|P|$. 
  Therefore by~\Cref{thm:char:iir}, $P$ satisfies the No Block is a Chain Property, the Two Down Property, and the Parallel Pair Property.
  Also, using~\Cref{pro:cw-ch},
  we know that $\ch(P)=\max\{|D_P[x]|:x\in P\}$. 

  \begin{claim*}
    $P$ is neither an antichain nor a chain.
  \end{claim*}

  \begin{proofclaim}
    An antichain can not be a counterexample to~\ref{item:cw}.
    Moreover, recall that if $A$ is an $n$-element antichain then $\dim_2(A) < |A|$ unless $|A| \in \{2,3,4\}$, thus, an antichain is also not a counterexample to~\ref{item:dim2}.
    For every chain $C$, we have $\iir(C) = |C|-1$, hence, a chain is not a counterexample to any of the statements.
  \end{proofclaim}

  Since $P$ is a block, which is not a chain, it has at least two maximal elements and at least two
  minimal elements.  However, we do not know whether $P$ is a component or not.

  \begin{claim*}\label{lem:P-connected}
    $P$ is a component.
  \end{claim*}

  \begin{proofclaim}
    We argue by contradiction, assuming that $P$ is not a component.
    Let $P=Q_1+\dots+Q_t$ be the component decomposition of $P$ (note that $t \geq 2$).
    Since $\iir(P)=|P|$ and $P$ is not an antichain, using \cref{pro:miir-components}, we may assume
    that $Q_1,\dots,Q_{t-1}$ are trivial, and $Q_t$ is a non-trivial
    poset in $\NMIIR$. For each $i\in[t-1]$, we let $u_i$ be the
    singleton element in~$Q_i$.

    \begin{subclaim*}\label{clm:Qt-chain}
      $Q_t$ is a chain.
    \end{subclaim*}

    \begin{proofsubclaim}
      Let $Q_t = R_1 < \dots < R_s$ be the block decomposition of $Q_t$.
      If $s = 1$, then the statement follows, hence, assume to the contrary that $s \geq 2$.
      Recall that since $Q_t \in \NMIIR$, $\iir(R_i) = |R_i|$ for every $i \in \{2,\dots,s\}$.
      In particular, $R_s$ is not a chain.
      Let $y$ and $z$ be distinct
      maximal elements of $Q_t$ and remark that $\ch(P) \geq 2$ since $Q_t$ is non-trivial.  We form an inclusion representation $\cgS=\inrep{S_x:x\in P}$ of $P$ using the
      following rules. Set $S_{u_1}= \{y,z\}$; $S_{u_j}=\{u_j\}$ if 
      $j\in \{2,\dots, t-1\}$; and $S_x=D_P[x]$ if $x\in Q_t$ .
      Note that $\cgS$ is indeed an inclusion representation of $P$.
      Moreover, since $\ch(P) \geq 2$ and $\ch(P) = \max\{|D_P[x]| : x \in P\}$ (\Cref{pro:cw-ch}), we have $|S_x| \leq \ch(P)$ for every $x \in P$.
      Finally, since $u_1 \notin \bigcup \cgS$, we obtain $|\bigcup \cgS| < |P|$, yielding $\cw(P) < |P|$, which shows that $P$ is not a counterexample assuming that $Q_t$ is not a chain.
    \end{proofsubclaim}

    We may assume that there are no integers in $\{u_1,\dots,u_{t-1}\}$.
    Let $n=|Q_t|$.  Then $n\ge2$, and we may assume that the elements of 
    $Q_t$ are labeled with the integers in $[n]$ with $i<j$ in $Q_t$ 
    if and only if $i<j$ as integers.  
    Again note that $\ch(P) \geq 2$ in this case.

    If $t=2$, then $P\in\bbB$.
    If $t = 3$ and $n = 2$, then $P = Z$.
    In both cases above, $P$ is not a counterexample to \ref{item:dim2} or \ref{item:cw}.
    We split the remaining cases into two: (Case 1) $t \geq 4$ and (Case 2) $t = 3$ and $n \geq 3$.

    In each of these two cases, we will reach a 
    contradiction by constructing an inclusion representation $\cgS = \inrep{S_x : x \in P}$ such that $|S_x| \leq \ch(P)$ for every $x \in P$ and $|\bigcup \cgS| < |P|$.
    Similarly as in the Subclaim above, this will yield $\cw(P) < |P|$ showing that $P$ is not a counterexample to the statement.
    of $P$.

    In Case 1, we use the following rules:
    \begin{itemize}[label = $\circ$]
      \item set $S_{u_1}=\{u_2,u_3\}$;
      \item set $S_{u_2}=\{u_2,n\}$; 
      \item set $S_{u_3}=\{u_3,n\}$;
      \item set $S_x=D_P[x]$ for each $x\in P-\{u_1,u_2,u_3\}$.
    \end{itemize} 
    
    In Case 2, we use the following rules:
    \begin{itemize}[label = $\circ$]
      \item set $S_{u_1}=\{u_2,n-1\}$;
      \item set $S_{u_2}=\{u_2,n\}$; 
      \item set $S_x=D_P[x]$ for each $x\in P-\{u_1,u_2\}$.
    \end{itemize} 
    It is easy to verify that $\cgS = \inrep{S_x : x \in P}$ satisfies the required conditions in both cases.
    With these observations, the proof of the claim
    is complete.
  \end{proofclaim}

  Now we have shown that $P$ is both a block and a component.
  Let $M$ denote the set of maximal elements of $P$. 
  Since $P$ is not a chain, $|M|\ge2$. 
  The next claim is an immediate consequence of the fact that
  $P$ satisfies the Parallel Pair Property.

  \begin{claim*}
    If $y$ and $z$ are distinct elements of $M$, then either
    $D_P(y)\subseteq D_P(z)$ or $D_P(z)\subseteq D_P(y)$.
  \end{claim*}

  In particular, the elements of $M$ can be labeled such that $M = \{y_1,\dots,y_m\}$ and $D_P(y_1) \subset \dots \subset D_P(y_m)$.
  Since $P$ is a component, $D_P(y_1) \neq \emptyset$.
  Since $P$ is a block, $D_P(y_1) \subsetneq D_P(y_m)$.

  \begin{claim*}
    $|M|=2$.
  \end{claim*}

  \begin{proofclaim}
    Suppose to the contrary that $|M|\ge3$.
    We form an inclusion representation $\cgS=\inrep{S_x:x\in P}$ 
    of $P$ by setting $S_{y_1}=\{y_2,y_m\}\cup D_P(y_1)$ and $S_x = D_P[x]$ for every $x \in P - \{y_1\}$.
    Since elements of $M$ are maximal in $P$, $\cgS$ is indeed an inclusion representation of $P$.
    Additionally, by~\Cref{pro:cw-ch}, $|S_x| \leq \ch(P)$ for every $x \in P - \{y_1\}$ and
        \[|S_{y_1}| = 2 + |D_P(y_1)| \leq 2 + (|D_P(y_m)-1) = |D_P[y_m]| \leq \ch(P).\]
    Furthermore, $|\bigcup \cgS| < |P|$ as $y_1 \notin \bigcup \cgS$, hence, $P$ is not a counterexample to the statement of the lemma.
    This contradiction yields that $|M|=2$.
  \end{proofclaim}

  Recall that now $M = \{y_1,y_2\}$ and $\emptyset \neq D_P(y_1) \subseteq D_P(y_2)$.
  For all $u,v \in P$, let $I(u,v)$ be the set of all elements $x$ in $P$ such that $u \leq x \leq v$ in $P$.
  Let $y \in P$ be the maximal such that $y$ covers at least two elements in $P$; $y \leq y_2$ in $P$; and $I(y,y_2)$ is a chain in $P$.
  A crucial property of $y$ is that for every $x \in P - I(y,y_2)$, we have $x < y_2$ in $P$ if and only if $x \leq y$ in $P$.
  Before we continue, we need to define one more element.
  By the Two Down Property, there exists $z \in P$ such that $D_P(y) \subset D_P(z)$ and $y \parallel z$ in $P$.
  In particular, $z \parallel y_2$ in $P$.
  Since $M = \{y_1,y_2\}$, it follows that $z \leq y_1$ in $P$.
  Moreover, since $D_P(y_1) \subset D_P(y_2)$, we obtain $z = y_1$.

  \begin{claim*}
  $y < y_2$ in $P$.
  \end{claim*}
  \begin{proofclaim}
      Suppose to the contrary that $y = y_2$.
      This yields $D_P(y_2) = D_P(y) \subseteq D_P(z) \subseteq D_P(y_1)$, which we know to be false.
  \end{proofclaim}

    Recall that by the definition of $z = y_1$, we have $D_P(y) \subset D_P(y_1)$.
    On the other hand, $D_P(y_1) \subset D_P(y_2)$ and $y \parallel y_1$, hence, by the definition of $y$, $D_P(y_1) \subset D_P(y)$.
    In particular, $D_P(y_1) = D_P(y)$.
    However, this shows that $P$ is not a block, which is a contradiction that ends the proof.
\end{proof}

\medskip
\section{Revisiting the Structure of Posets in \texorpdfstring{$\MIIR$}{MIIR}}
\label{sec:conclusion}
\medskip

In~\Cref{sec:dimcw}, we defined the classes of posets $\MTD$ and $\MCW$.
Theorems~\ref{thm:char:dim2} and~\ref{thm:char:cw} show that for a poset $P$, $\dim_2(P) = |P|$ if and only if $P \in \MTD$, and $\cw(P) = |P|$ if and only if $P \in \MCW$.
Also, recall that $\MIIR$ is the class of posets with $\iir(P) = |P|$, and we describe it in terms of three properties in~\Cref{thm:char:iir}.
In this section, we want to give a few remarks on the structure of posets in the mentioned classes.

The final steps in the proof of Theorem~\ref{thm:dim2-cw-block} show
that there are no blocks in either $\MTD$ or $\MCW$ that are also
components.  Moreover,
for a given integer $n$ with $n\ge5$, there is a unique block of cardinality $n$ in $\MTD$, which
is a poset in $\bbB$.  Also, there are two blocks of cardinality $n$
in $\MCW$, a poset from $\bbB$ and an $n$-element antichain.  The situation
with the class $\MIIR$ is far more complex.  For a large integer $n$, there
are exponentially many distinct blocks in $\MIIR$ that are components.  The 
next example explains how these blocks can be constructed.

\begin{example}
  Let $m$ and $n$ be integers, each of which is at least~$3$.
  $\sigma=(a_1,a_2,\dots,a_m)$ be a non-decreasing sequence
  of positive integers such that (1)~$a_1<n$; and (2)~$a_{m-1}=a_m=n$.  
  We associate with the sequence $\sigma$ a poset
  $P=P(\sigma)$ of height~$2$ defined as follows:
  \begin{itemize}[label = $\circ$]
    \item $P$ has $n$ minimal elements labeled $\{x_1,\dots,x_n\}$.
    \item $P$ has $m$ maximal elements labeled $\{y_1,\dots,y_m\}$.
    \item $x_i< y_j$ in $P$ if and only if $i\le a_j$.
  \end{itemize}
  It is an easy exercise to show that $P$ is a block, $P$ is a component,
  and $\iir(P)=|P|$.
  We show in Figure~\ref{fig:iir-block-component} the poset $P$ associated with
  the sequence $(1,1,1,2,2,2,2,4,4,4,8,8)$.
\end{example}

\begin{figure}[tp]
  \begin{center}
    \includegraphics{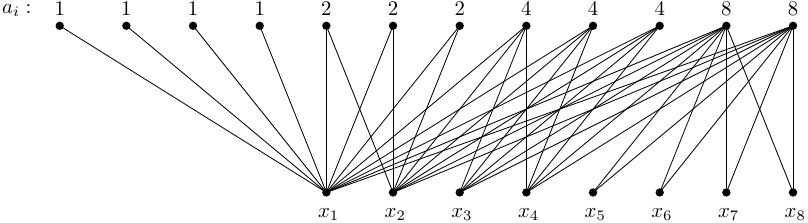}

  \end{center}
  \caption{A poset in $\MIIR$ that is both a block and a component.}
  \label{fig:iir-block-component}
\end{figure}

It is relatively straightforward to verify that all height~$2$ posets in $\MIIR$ that
are both blocks and components arise from the construction in the preceding
example.  However, we show in Figure~\ref{fig:cw-fig3} a poset $P$ of height~$6$
such that $P$ is both a block and a component.  It can be checked that
$P$ belongs to $\MIIR$. 

\begin{figure}[tp]
  \begin{center}
    \includegraphics{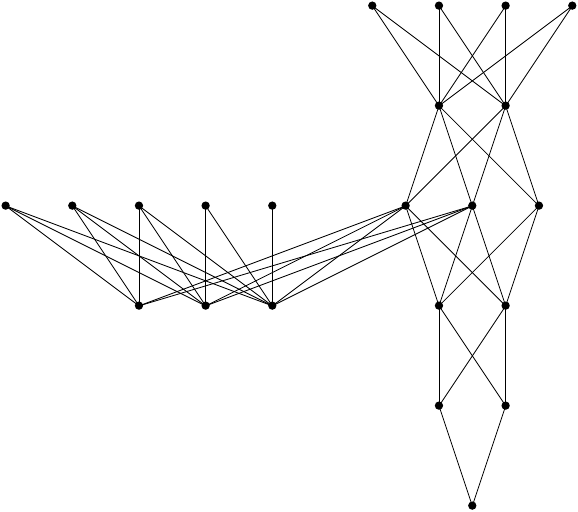}

  \end{center}
  \caption{A height~$6$ poset in $\MIIR$ that is a block and a component.}
  \label{fig:cw-fig3}
\end{figure}

In spite of these examples, it still makes sense to ask whether any
additional structural information can be gathered about properties
of blocks in $\MIIR$ that are also components.  We suspect that there
is a positive answer to this question.

\bibliography{bib}
\bibliographystyle{abbrv}
\end{document}